\theoremstyle{plain}
  \newtheorem{thm}{Theorem}[section]
  \newtheorem{lem}[thm]{Lemma}
  \newtheorem{prop}[thm]{Proposition}
  \newtheorem{conj}[thm]{Conjecture}
  \newtheorem*{obs*}{Observation}
\theoremstyle{definition}
\theoremstyle{remark}
  \newtheorem{rem}[thm]{Remark}
  \newtheorem*{ack}{Acknowledgments}
\newcommand{\Z}{\mathbb{Z}}
\newcommand{\C}{\mathbb{C}}
\newcommand{\Vol}{\operatorname{Vol}}
\newcommand{\Res}{\operatorname{Res}}
\newcommand{\floor}[1]{\lfloor#1\rfloor}
\renewcommand{\Re}{\operatorname{Re}}
\renewcommand{\Im}{\operatorname{Im}}
\numberwithin{equation}{section}
\begin{document}
\title
{Colored Jones polynomials with polynomial growth}
\author{Kazuhiro Hikami}
\address{
Department of Physics,
Graduate School of Science,
University of Tokyo,
Hongo 7-3-1, Bunkyo, Tokyo 113-0033, Japan.}
\urladdr{http://gogh.phys.s.u-tokyo.ac.jp/{\textasciitilde}hikami/}
\email{hikami@phys.s.u-tokyo.ac.jp}
\author{Hitoshi Murakami}
\address{
Department of Mathematics,
Tokyo Institute of Technology,
Oh-okayama, Meguro, Tokyo 152-8551, Japan
}
\email{starshea@tky3.3web.ne.jp}
\date{\today}
\begin{abstract}
The volume conjecture and its generalizations say that the colored Jones polynomial corresponding to the $N$-dimensional irreducible representation of $sl(2;\C)$ of a (hyperbolic) knot evaluated at $\exp(c/N)$ grows exponentially with respect to $N$ if one fixes a complex number $c$ near $2\pi\sqrt{-1}$.
On the other hand if the absolute value of $c$ is small enough, it converges to the inverse of the Alexander polynomial evaluated at $\exp{c}$.
In this paper we study cases where it grows polynomially.
\end{abstract}
\begin{comment}
The volume conjecture and its generalizations say that the colored Jones polynomial corresponding to the N-dimensional irreducible representation of sl(2;C) of a (hyperbolic) knot evaluated at exp(c/N) grows exponentially with respect to N if one fixes a complex number c near 2*Pi*I.
On the other hand if the absolute value of c is small enough, it converges to the inverse of the Alexander polynomial evaluated at exp(c).
In this paper we study cases where it grows polynomially.
\end{comment}
\keywords{knot, volume conjecture, colored Jones polynomial, Alexander polynomial}
\subjclass[2000]{Primary 57M27 57M25 57M50}
\thanks{The first author is supported by Grant-in-Aid for Young Scientists (B) ( 18740227), and Grant-in-Aid for Scientific Research (B) (19340009) and (C) (19540069).
The second author is  supported by Grant-in-Aid for Exploratory Research (18654009).}
\maketitle
%%%%%%%%%%%%%%%%%%%%%%%%%%%%%%%%%%%%%%%%%%%%%%%%%%%%%%%%%%%%%%%%%%%%
\section{Introduction}
Let $J_N(K;q)$ be the $N$-dimensional colored Jones polynomial of a knot $K$ normalized so that $J_N(U;q)=1$ for the unknot $U$ and that $J_2(K;q)=V(K;q)$ is the original Jones polynomial \cite{Jones:BULAM385,Kirillov/Reshetikhin:1989}.
It is conjectured \cite{Kashaev:MODPLA95,Kashaev:LETMP97,Murakami/Murakami:ACTAM12001} that the sequence $\{J_N\bigl(K;\exp(2\pi\sqrt{-1}/N)\bigr)\}_{N=2,3,\dots}$ grows exponentially as $\exp\left(N\Vol\left(S^3\setminus{K}\right)/(2\pi)\right)$, where $\Vol$ is the simplicial volume normalized so that it coincides with the hyperbolic volume if $S^3\setminus{K}$ possesses the unique complete hyperbolic structure.
It is proved by Y.~Yokota and the second author that for the figure-eight knot the sequence $\{J_N\bigl(K;\exp(c/N)\bigr)\}_{N=2,3,\dots}$ also grows exponentially and gives the volumes and the Chern--Simons invariants obtained from $K$ by Dehn surgeries, if $c$ is close to $2\pi\sqrt{-1}$ \cite{Murakami/Yokota:JREIA2007}.
\par
On the other hand the second author proved that for the figure-eight knot the sequence $\{J_N\bigl(K;\exp(c/N)\bigr)\}_{N=2,3,\dots}$ converges to the inverse of the Alexander polynomial evaluated at $\exp{c}$, if $|c|$ is small \cite{Murakami:JPJGT2007}.
S.~Garoufalidis and T.~Le generalized it to every knot \cite{Garoufalidis/Le:aMMR}.
\par
Then one may wonder when the sequence $\{J_N\bigl(K;\exp(c/N)\bigr)\}_{N=2,3,\dots}$ grows polynomially.
The first example was given by R.~Kashaev and O.~Tirkkonen \cite{Kashaev/Tirkkonen:ZAPNS2000}.
In fact they showed that for torus knots the sequence $\{J_N\bigl(K;\exp(2\pi\sqrt{-1}/N)\bigr)\}_{N=2,3,\dots}$ grows as $N^{3/2}$.
The first author studied its relations to modular forms in \cite{Hikami/Kirillov:PHYLB2003,Hikami:EXPMA2003,Hikami:RAMJO2006}.
See also \cite{Hikami:COMMP2004} about a similar result for torus links.
\par
In this paper we give other examples where the sequence $\{J_N\bigl(K;\exp(c/N)\bigr)\}_{N=2,3,\dots}$ grows polynomially.
In fact we will give the cases where $\exp{c}$ is a zero of the Alexander polynomial such that $|c|$ is the smallest among all the logarithms of the zeroes of the Alexander polynomial.
\begin{thm}\label{thm:figure8}
Let $E$ be the figure-eight knot.
Put $\xi:=\log\bigl((3+\sqrt{5})/2\bigr)$.
Then
\begin{equation*}
  J_N\left(E;\exp\left(\frac{\xi}{N}\right)\right)
  \underset{N\to\infty}{\sim}
  \frac{\Gamma\left(\frac{1}{3}\right)}{(3\xi)^{\frac{2}{3}}}N^{\frac{2}{3}}.
\end{equation*}
\end{thm}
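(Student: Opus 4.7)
The plan is to work directly from Habiro's cyclotomic expansion of the colored Jones polynomial of the figure-eight knot,
\[
  J_{N}(E;q)=\sum_{k=0}^{N-1}\prod_{j=1}^{k}\bigl(q^{(N-j)/2}-q^{-(N-j)/2}\bigr)\bigl(q^{(N+j)/2}-q^{-(N+j)/2}\bigr),
\]
and to substitute $q=\exp(\xi/N)$. An elementary expansion of the $j$-th factor yields $q^{N}+q^{-N}-q^{j}-q^{-j}=2\cosh\xi-2\cosh(j\xi/N)$. The choice $\xi=\log\bigl((3+\sqrt{5})/2\bigr)$ is exactly the condition that $e^{\xi}$ be a root of the Alexander polynomial $\Delta(t)=-t+3-t^{-1}$, equivalently $2\cosh\xi=3$. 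Hence
\[
  J_{N}\!\left(E;\exp(\xi/N)\right)=\sum_{k=0}^{N-1}a_{k}(N),\qquad a_{k}(N):=\prod_{j=1}^{k}\bigl(3-2\cosh(j\xi/N)\bigr),
\]
and each factor lies in $(0,1)$ for $1\le j\le N-1$, so $\{a_{k}(N)\}_{k}$ is positive and decreasing with $a_{0}(N)=1$.

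The next step is to identify the scale that governs the sum. Taylor-expanding $3-2\cosh(x)=1-x^{2}-x^{4}/12-\cdots$ at $x=0$ and taking logarithms gives
\[
  \log a_{k}(N)=-\sum_{j=1}^{k}\bigl(j\xi/N\bigr)^{2}+O\!\left(\sum_{j=1}^{k}(j/N)^{4}\right)=-\frac{\xi^{2}k^{3}}{3N^{2}}+O\!\left(\frac{k^{2}}{N^{2}}\right)+O\!\left(\frac{k^{5}}{N^{4}}\right).
\]
This points to the rescaling $k=sN^{2/3}$: in that variable the leading term becomes $-\xi^{2}s^{3}/3$ while the error terms are $O(N^{-2/3})$, up to logarithmic factors, uniformly for $k\le K_{N}:=\floor{N^{2/3}\log N}$. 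Consequently $a_{k}(N)=\exp\bigl(-\xi^{2}(k/N^{2/3})^{3}/3\bigr)\bigl(1+o(1)\bigr)$ uniformly on this range.

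The sum is then handled in two pieces. For $k\le K_{N}$, a Riemann-sum estimate against $s=k/N^{2/3}$ with mesh $N^{-2/3}$ gives
\[
  \sum_{k=0}^{K_{N}}a_{k}(N)=N^{2/3}\bigl(1+o(1)\bigr)\int_{0}^{\infty}\exp\!\left(-\frac{\xi^{2}s^{3}}{3}\right)ds,
\]
and the substitution $u=\xi^{2}s^{3}/3$ evaluates the integral as $\Gamma(1/3)/(3\xi)^{2/3}$. For $k>K_{N}$, monotonicity of $\{a_{k}(N)\}_{k}$ together with $a_{K_{N}}(N)\le\exp(-c(\log N)^{3})$ for some $c>0$ shows that the remaining tail is smaller than any negative power of $N$ and hence negligible.

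The main obstacle I expect is maintaining the uniformity of both the Taylor approximation of $\log(3-2\cosh(j\xi/N))$ and of the Riemann-sum step throughout the entire range $0\le k\le K_{N}$, which extends a factor $\log N$ beyond the natural scale $N^{2/3}$. The cleanest approach is probably to bound the tail of the Taylor series of $\log(3-2\cosh(x))$ at $x=0$ uniformly for $j\xi/N$ bounded away from $\xi$, and to absorb the Euler--Maclaurin boundary corrections before rescaling. Once these uniform estimates are established, the claimed asymptotic $\Gamma(1/3)/(3\xi)^{2/3}\cdot N^{2/3}$ follows at once.
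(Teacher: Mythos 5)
Your proposal is correct, and it starts from the same point as the paper (Habiro's cyclotomic formula, the identity $2\cosh\xi=3$, and the rewriting of the summand as $\prod_{j=1}^{k}\bigl(3-2\cosh(j\xi/N)\bigr)$), but the middle of the argument is organized differently. The paper truncates at the macroscopic scale $k\le\floor{\varepsilon N}$ (the tail being exponentially small because the factors are less than $f(\varepsilon')<1$ there), converts the truncated sum into the integral $N\int_{0}^{\varepsilon}\exp\bigl(N\int_{0}^{y}\log f(x)\,dx\bigr)\,dy$ by monotonicity, and then invokes a general Laplace-method lemma for a boundary maximum with $h'(0)=h''(0)=0$ and $h^{(3)}(0)=-2\xi^{2}<0$, which produces the exponent $N^{-1/3}$ and the constant $\Gamma(1/3)/(3\xi)^{2/3}$. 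You instead truncate at the mesoscopic scale $K_{N}=\floor{N^{2/3}\log N}$ and carry out the Laplace analysis by hand via the rescaling $k=sN^{2/3}$, arriving at $N^{2/3}\int_{0}^{\infty}e^{-\xi^{2}s^{3}/3}\,ds=N^{2/3}\,\Gamma(1/3)/(3\xi)^{2/3}$, which agrees with the paper's constant. Your route is more self-contained (no appeal to a packaged asymptotic lemma) at the cost of the uniformity bookkeeping you flag; note that this bookkeeping is actually painless, since $K_{N}/N\to0$ forces $j\xi/N\to0$ uniformly over $j\le K_{N}$, so a single remainder bound $|\log(3-2\cosh x)+x^{2}|\le Cx^{4}$ on a fixed neighborhood of $0$ suffices, and your tail bound $N\exp(-c(\log N)^{3})$ together with the monotonicity of the partial products disposes of $k>K_{N}$. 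The paper's route, by contrast, localizes more crudely but delegates the delicate scale analysis to the Laplace lemma; one small advantage of its two-sided sandwich (between $f(\varepsilon)$ times the integral and the integral plus $1/N$) is that it never needs a uniform multiplicative $1+o(1)$ over a growing range of $k$.
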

\begin{thm}\label{thm:torus}
Let $a$ and $b$ be coprime integers greater than two, and $T(a,b)$ the torus knot of type $(a,b)$.
Then
\begin{equation*}
  J_N\left(T(a,b);\exp\left(\frac{2\pi\sqrt{-1}}{abN}\right)\right)
  \underset{N\to\infty}{\sim}
  e^{-\frac{\pi\sqrt{-1}}{4}}\,
  \frac{\sin\left(\frac{\pi}{a}\right)\sin\left(\frac{\pi}{b}\right)}
       {\sqrt{2}\sin\left(\frac{\pi}{ab}\right)}
  N^{\frac{1}{2}}.
\end{equation*}
\end{thm}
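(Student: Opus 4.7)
The plan is to start from an explicit Rosso--Jones/Morton-type closed form for the colored Jones polynomial of a torus knot. Such a formula writes
\begin{equation*}
J_N(T(a,b);q)=\frac{q^{ab(1-N^2)/4}}{q^{N/2}-q^{-N/2}}\,\bigl(\mathcal{S}_+(q)-\mathcal{S}_-(q)\bigr),
\end{equation*}
where each $\mathcal{S}_\pm(q)=\sum_{k}q^{-abk^2+\alpha_\pm k+\beta_\pm}$ is a finite sum over $k\in\{-(N-1)/2,\dots,(N-1)/2\}$ with linear and constant shifts $(\alpha_\pm,\beta_\pm)$ determined by $a$ and $b$. This is essentially the same starting point from which Kashaev--Tirkkonen extracted the $N^{3/2}$ asymptotic at $q=\exp(2\pi\sqrt{-1}/N)$.

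Substituting $q=\exp(2\pi\sqrt{-1}/(abN))$, the denominator evaluates to $2\sqrt{-1}\sin(\pi/(ab))$, a nonzero constant that supplies the $1/\sin(\pi/(ab))$ factor in the theorem, and the quadratic exponent in the summand becomes $\exp(-2\pi\sqrt{-1}k^2/N)$, a classical Gauss-sum phase stationary at $k=0$. Each $\mathcal{S}_\pm$ is then analysed by Poisson summation (equivalently, stationary phase around $k=0$): the zero Fourier mode reduces the sum to the Fresnel integral
\begin{equation*}
\int_{-\infty}^{\infty}\exp(-2\pi\sqrt{-1}x^2/N)\,dx=\sqrt{N/2}\,\exp(-\pi\sqrt{-1}/4),
\end{equation*}
which accounts for the $N^{1/2}\exp(-\pi\sqrt{-1}/4)/\sqrt{2}$ prefactor, while the non-zero Fourier modes, whose stationary points lie at the boundary or outside the summation range, contribute only lower-order corrections.

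The remaining step is to complete the square inside each $\mathcal{S}_\pm$, evaluate the constant phases at the shifted saddle points $k_\pm^\ast=\alpha_\pm/(2ab)$, and combine them with the $N$-dependent prefactor $q^{ab(1-N^2)/4}=\exp(\pi\sqrt{-1}/(2N))\exp(-\pi\sqrt{-1}N/2)$ to extract the factor $\sin(\pi/a)\sin(\pi/b)$. The main obstacle is precisely this bookkeeping: $\mathcal{S}_+$ and $\mathcal{S}_-$ are each individually of magnitude $\sqrt{N}$, so their difference is sensitive to the precise sub-leading contributions from the linear shifts $\alpha_\pm$ and from the constant shifts $\beta_\pm$; one must expand these slowly varying factors to the required precision and verify that the parity-sensitive phase $\exp(-\pi\sqrt{-1}N/2)$ coming from the prefactor cancels exactly against phases produced by the saddle-point evaluation, so that the leading asymptotic is uniform in $N$ rather than depending on $N$ modulo $4$. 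The desired trigonometric ratio then emerges as the difference of the two shifted Fresnel contributions, and should match the residue of $1/\Delta_{T(a,b)}(t)$ at its zero $t=\exp(2\pi\sqrt{-1}/(ab))$, in agreement with the ``zero of the Alexander polynomial'' heuristic signalled in the introduction.
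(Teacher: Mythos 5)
Your starting point (the Rosso--Jones/Morton sum) and overall strategy (Gauss-sum asymptotics at $q=\exp\bigl(2\pi\sqrt{-1}/(abN)\bigr)$, with the nonvanishing denominator $q^{N/2}-q^{-N/2}=2\sqrt{-1}\sin(\pi/(ab))$ explaining why one gets $N^{1/2}$ here rather than $N^{3/2}$) are reasonable, and they are genuinely different from the paper's route: the paper uses the contour-integral representation $J_N=\Phi_{a,b,r}(N)\int_C\tau_{T(a,b)}(z)e^{Nf_{a,b,r}(z)}\,dz$ with $\tau_{T(a,b)}(z)=2\sinh(az)\sinh(bz)/\sinh(abz)$, and at $r=1/(ab)$ the Gaussian saddle point $\pi r\sqrt{-1}$ collides with the first pole of $\tau_{T(a,b)}$, so the $\sqrt{N}$ term appears transparently as half the residue there (this is exactly the ``zero of the Alexander polynomial'' mechanism). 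However, your execution contains a genuine error in the stationary-phase bookkeeping, and it occurs at precisely the step that produces the answer.

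The zero Fourier mode of each $\mathcal{S}_\pm$ is a full Fresnel integral whose stationary value is $q^{-\alpha_\pm^2/(4ab)+\beta_\pm}=\exp\bigl(2\pi\sqrt{-1}(-\alpha_\pm^2/(4ab)+\beta_\pm)/(abN)\bigr)\to1$ as $N\to\infty$; hence the $m=0$ contributions to $\mathcal{S}_+$ and $\mathcal{S}_-$ agree up to $O(N^{-1/2})$ and \emph{cancel} in the difference $\mathcal{S}_+-\mathcal{S}_-$. They cannot produce the $N$-independent, nontrivial factor $\sin(\pi/a)\sin(\pi/b)$; following your plan literally one would conclude $\mathcal{S}_+-\mathcal{S}_-=o(\sqrt{N})$ and miss the growth entirely. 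The leading term actually comes from the $m=\pm1$ Fourier modes, whose stationary points lie within $O(1)$ of the \emph{endpoints} $\pm(N-1)/2$ of the summation range; a stationary point at the boundary still contributes half a Fresnel integral, which is of order $\sqrt{N}$, not a lower-order correction. (Compare the complete Gauss sum $\sum_{k=0}^{N-1}e^{2\pi\sqrt{-1}k^2/N}$, of modulus $\sqrt{N}$ for odd $N$, while its $m=0$ mode alone has modulus $\sqrt{N}/(2\sqrt{2})$; the boundary modes supply the rest.) For $m=\pm1$ the stationary value of the phase carries the factor $e^{\pm\pi\sqrt{-1}\alpha_\pm/(ab)}$, which does not tend to $1$, and it is the combination $\cos\bigl(\pi\alpha_+/(ab)\bigr)-\cos\bigl(\pi\alpha_-/(ab)\bigr)=-2\sin(\pi/a)\sin(\pi/b)$ (for the standard shifts $\alpha_\pm=a\pm b$), together with a phase $e^{-\pi\sqrt{-1}N/2}$ that cancels the one from the framing prefactor, that yields the stated asymptotics. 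To repair the argument you must retain and evaluate these boundary contributions, at which point the computation is essentially a Landsberg--Schaar type reciprocity rather than a single interior saddle-point evaluation; the paper's contour-integral formulation packages the same cancellation into the single residue of $\tau_{T(a,b)}$ at $\pi\sqrt{-1}/(ab)$.
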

\begin{rem}
Let us denote the Alexander polynomial of a knot $K$ by $\Delta(K;t)$.
Put $\Lambda(K):=\{z\in\C\mid\Delta(K;\exp{z})=0\}$; that is, $\Lambda(K)$ is the set of all the logarithmic values of the zeroes of the Alexander polynomial of $K$.
Since
\begin{gather*}
  \Delta(E;t)
  =
  -t+3-t^{-1}
  \\
  \intertext{and}
  \Delta\bigl(T(a,b)\bigr)
  =
  \frac{\left(t^{ab/2}-t^{-ab/2}\right)\left(t^{1/2}-t^{-1/2}\right)}
       {\left(t^{a/2}-t^{-a/2}\right)\left(t^{b/2}-t^{-b/2}\right)},
\end{gather*}
we have
\begin{gather*}
  \Lambda(E)
  =
  \{\pm\xi+2n\pi\sqrt{-1}\mid n\in\Z\}
  \\
  \intertext{and}
  \Lambda\bigl(T(a,b)\bigr)
  =
  \left\{
  \frac{2k\pi\sqrt{-1}}{ab}\mid k\in\Z,a\nmid k, b\nmid k
  \right\}.
\end{gather*}
Then we observe the following:
\begin{itemize}
\item
Since the figure-eight knot is amphicheiral, $J_N(E;q)=J_N(E;q^{-1})$.
Therefore we have the same formula for $-\xi$ in Theorem~\ref{thm:figure8}.
Note that $\xi,-\xi\in\Lambda(E)$ and that $|\xi|=|-\xi|=\min\{|z|\mid z\in\Lambda(E)\}$.
\item
Since $J_N(K;\overline{q})=\overline{J_N(K;q)}$ for any knot $K$, we have
\begin{equation*}
  J_N\left(T(a,b);\exp\left(-\frac{2\pi\sqrt{-1}}{abN}\right)\right)
  \underset{N\to\infty}{\sim}
  e^{\frac{\pi\sqrt{-1}}{4}}\,
  \frac{\sin\left(\frac{\pi}{a}\right)\sin\left(\frac{\pi}{b}\right)}
       {\sqrt{2}\sin\left(\frac{\pi}{ab}\right)}
  N^{\frac{1}{2}},
\end{equation*}
where $\overline{z}$ is the complex conjugate of $z$.
Note that $\pm2\pi\sqrt{-1}/(ab)\in\Lambda(T(a,b))$ and that $|\pm2\pi\sqrt{-1}/(ab)|=\min\{|z|\mid z\in\Lambda(T(a,b))\}$.
\end{itemize}
\end{rem}
In \cite{Murakami:JPJGT2007} the second author proved that if $|c|$ is small enough, then
\begin{equation*}
  \lim_{N\to\infty}J_N\left(E;\exp\left(\frac{c}{N}\right)\right)
  =
  \frac{1}{\Delta(E;\exp{c})}.
\end{equation*}
Moreover S.~Garoufalidis and T.~Le \cite{Garoufalidis/Le:aMMR} proved the formula above holds for any knot.
\par
So it is natural to expect that the sequence
$\{J_N\bigl(K;\exp(c/N)\bigr)\}_{N=2,3,\dots}$ diverges for $c\in\Lambda(K)$.
\begin{conj}\label{conj:polynomial}
For any knot $K$, let $c$ be an element in $\Lambda(K)$ such that $|c|=\min\{|z|\mid z\in\Lambda(K)\}$.
Then the sequence $\left\{J_N\bigl(K;\exp(c/N)\bigr)\right\}_{N=2,3,\dots}$ grows polynomially, and
\begin{equation*}
  \lim_{N\to\infty}J_N\left(K;\exp\left(\frac{tc}{N}\right)\right)
  =
  \frac{1}{\Delta\bigl(K;\exp(tc)\bigr)}
\end{equation*}
for $0\le t<1$.
\end{conj}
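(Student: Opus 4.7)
The conjecture splits naturally into two claims: convergence of $J_N\bigl(K;\exp(tc/N)\bigr)$ to $1/\Delta\bigl(K;\exp(tc)\bigr)$ for $0\le t<1$, and polynomial growth at $t=1$. I would treat them separately, using different tools for each.

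For the convergence on the open disk, the starting point is the Garoufalidis--Le theorem, which gives the desired limit for $|u|$ small. For each fixed $N$, the function $u\mapsto J_N(K;\exp(u/N))$ is entire, while the candidate limit $1/\Delta\bigl(K;\exp u\bigr)$ is holomorphic on the open disk $D$ of radius $|c|$ by the definition of $\Lambda(K)$. If one can extract from a state sum (or a cyclotomic expansion) of $J_N$ a bound that is uniform in $N$ on compact subsets of $D$, then the Vitali--Porter theorem applied to the family $\{J_N\}$ upgrades the Garoufalidis--Le convergence on a small disk around the origin to convergence on all of $D$. So the analytic task reduces to producing these compact-on-compact bounds with explicit $N$-uniform control of the state-sum tails.

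For the polynomial growth at $t=1$, the model is the calculation carried out in Theorems~\ref{thm:figure8} and \ref{thm:torus}. The strategy is to write $J_N(K;\exp(u/N))$ as an integral or a one-parameter sum to which the saddle-point method or the Euler--Maclaurin formula applies. At interior $u\in D$ the asymptotics is governed by a non-degenerate critical point of an effective potential, whose Gaussian integral evaluates to $1/\Delta\bigl(K;\exp u\bigr)$; as $u$ approaches an Alexander zero this critical point degenerates, the Gaussian over the saddle ceases to converge, and the proper renormalization produces a power of $N$ whose exponent records the order of the degeneracy. Theorem~\ref{thm:figure8} exhibits a coalescence of two saddles yielding $N^{2/3}$, and Theorem~\ref{thm:torus} a boundary saddle giving $N^{1/2}$; for a general knot one hopes the exponent can be read off from the local Taylor expansion of the potential at the relevant Alexander zero.

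The principal obstacle is not the formal saddle-point analysis but the production of a workable integral or one-sum representation of $J_N$ for a general knot. For the figure-eight and for torus knots we are lucky to have such a representation; already for a general two-bridge knot the Habiro-type sum carries several summation variables and isolating the saddle near an Alexander zero becomes genuinely delicate, while for cabled or satellite knots no comparable simplification is known. A secondary difficulty is that the conjecture does not specify the polynomial growth rate, so a truly uniform argument would have to identify the exponent intrinsically from the Alexander module (and probably further torsion data), and I do not see a mechanism that extracts it from an abstract state sum without knot-specific input.
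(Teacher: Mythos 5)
The statement you are attempting is Conjecture~\ref{conj:polynomial}: the paper does not prove it, and offers only supporting evidence --- the figure-eight knot (Theorem~\ref{thm:figure8}), torus knots (Theorem~\ref{thm:torus} via Proposition~\ref{prop:asymptotic}), and connected-sums of copies of these (where the extra ingredients are multiplicativity of $J_N$ under connected-sum and convergence of the non-extremal factors). Your proposal is likewise not a proof but a program, as you yourself acknowledge; judged as a program it is a reasonable abstraction of exactly the techniques the paper deploys in its special cases, namely Laplace's method at a degenerate boundary critical point for the figure-eight knot and a contour-shift/residue analysis in which a pole of $\tau_{K}$ lands on the translated contour for torus knots.

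The two genuine gaps are these. First, for $0\le t<1$ your Vitali--Porter reduction is very nearly circular: local uniform boundedness of the holomorphic family $u\mapsto J_N\bigl(K;\exp(u/N)\bigr)$ on compact subsets of the disk of radius $|c|$ is not a technical estimate one extracts from ``a state sum'' --- for a sequence of holomorphic functions already known to converge near $0$ (Garoufalidis--Le), local boundedness on that disk is essentially equivalent to the locally uniform convergence being claimed, and no such bound is known beyond the small radius the Garoufalidis--Le argument itself provides. Second, at $t=1$ the saddle-point picture presupposes a one-variable integral or sum representation with an identifiable potential whose degeneracy at the Alexander zero can be quantified; such representations are available only for very special knots (the paper in fact uses two structurally different representations for its two examples, producing the distinct exponents $2/3$ and $1/2$), and the conjecture's silence about the growth exponent reflects precisely the absence of any intrinsic prediction for it from the Alexander module. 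Neither gap is closed by the proposal, which is consistent with the statement remaining a conjecture; a reviewer should treat your text as a plausible research outline, not as a proof.
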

We will discuss some evidence for the conjecture by using connected-sums.
\begin{ack}
The second author thanks S.~Garoufalidis, A.~Gibson and R.~van der Veen for helpful discussions.
He also thanks the Institute of Mathematics, Vietnamese Academy of Science and the organizers of the conference `International Conference on Quantum Topology', 6--12 August, 2007 for their hospitality.
\end{ack}
%%%%%%%%%%%%%%%%%%%%%%%%%%%%%%%%%%%%%%%%%%%%%%%%%%%%%%%%%%%%%%%%%
\section{Figure-eight knot}
In this section we prove Theorem~\ref{thm:figure8}.
\par
By K.~Habiro \cite{Habiro:SURIK00} (see also \cite{Masbaum:ALGGT12003}) and T.~Le, it is proved that
\begin{align*}
  J_N(E;q)
  &=
  \sum_{k=0}^{N-1}
  \prod_{j=1}^{k}
  \left(q^{(N+j)/2}-q^{-(N+j)/2}\right)
  \left(q^{(N-j)/2}-q^{-(N-j)/2}\right).
\end{align*}
Replacing $q$ with $\exp(\xi/N)$ we get
\begin{equation}\label{eq:fig8}
  J_N\left(E;\exp\frac{\xi}{N}\right)
  =
  \sum_{k=0}^{N-1}
  \prod_{j=1}^{k}
  f\left(\frac{j}{N}\right),
\end{equation}
where $f(x):=3-2\cosh(\xi x)$.
We will approximate $J_N\bigl(E;\exp(\xi/N)\bigr)$ as follows.
\begin{prop}\label{prop:approx_fig8}
For any $0<\varepsilon<1$, we have
\begin{equation*}
  J_N\left(E;\exp\frac{\xi}{N}\right)
  \underset{N\to\infty}{\sim}
  N\int_{0}^{\varepsilon}
  \exp
  \left(
    N\int_{0}^{y}\log{f(x)}dx
  \right)
  dy.
\end{equation*}
\end{prop}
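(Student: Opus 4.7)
The plan is to reduce the sum in \eqref{eq:fig8} to the stated integral by splitting it at $k=\floor{\varepsilon N}$, bounding the tail directly, and approximating the head via a Riemann sum with an Euler--Maclaurin correction. The analysis rests on three elementary properties of $f(x)=3-2\cosh(\xi x)$ on $[0,1]$: $f(0)=1$; $f(1)=0$ (since $e^{\xi}+e^{-\xi}=3$ by the choice of $\xi$); and $f$ is smooth and strictly decreasing from $1$ to $0$. Consequently $0\le f(x)\le 1$, $\log f(x)\le 0$, and the function $\Phi(y):=\int_{0}^{y}\log f(x)\,dx$ is negative and strictly decreasing for $y>0$.

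For the tail $k\ge\floor{\varepsilon N}$, since each factor satisfies $f(j/N)\le 1$, adjoining further factors only shrinks the product, so every tail term is bounded by $\prod_{j=1}^{\floor{\varepsilon N}}f(j/N)$. An Euler--Maclaurin estimate for $\log f$ on $[0,\varepsilon]$ gives $\sum_{j=1}^{\floor{\varepsilon N}}\log f(j/N)=N\Phi(\varepsilon)+O(1)$, so each such term is at most $Ce^{-cN}$ with $c=-\Phi(\varepsilon)>0$, and the full tail is $O(Ne^{-cN})$---negligible against the polynomial right-hand side.

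For the head I would apply the same Euler--Maclaurin formula uniformly in $0\le k\le\floor{\varepsilon N}$: using $\log f(0)=0$ and the boundedness of $(\log f)''$ on $[0,\varepsilon]$,
\[
\sum_{j=1}^{k}\log f(j/N)=N\Phi(k/N)+\tfrac{1}{2}\log f(k/N)+O(1/N),
\]
so that $\prod_{j=1}^{k}f(j/N)=\sqrt{f(k/N)}\exp\bigl(N\Phi(k/N)\bigr)\bigl(1+O(1/N)\bigr)$. Summing in $k$ presents the head as a Riemann sum of mesh $1/N$ for $N\int_{0}^{\varepsilon}\sqrt{f(y)}\exp(N\Phi(y))\,dy$.

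The principal difficulty is the final step: the integrand itself depends on $N$, so classical Riemann-sum estimates do not apply. The resolution is that $\Phi(y)\sim-\xi^{2}y^{3}/3$ near the origin, so both the sum and the integral concentrate in a window of width $O(N^{-1/3})$ about $y=0$; inside that window $\sqrt{f(y)}=1+O(N^{-2/3})$, so the factor $\sqrt{f}$ may be replaced by $1$ at leading order. Making this rigorous comes down to rescaling $y=uN^{-1/3}$, whereupon both the sum and the integral become approximations to $\int_{0}^{\infty}\exp(-\xi^{2}u^{3}/3)\,du$ of the same leading order; the same rescaling will ultimately deliver Theorem~\ref{thm:figure8} from the proposition.
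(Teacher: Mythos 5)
Your argument is correct, and its first half coincides with the paper's: the split at $k=\floor{\varepsilon N}$ and the bound of every tail term by $\prod_{j=1}^{\floor{\varepsilon N}}f(j/N)$ is exactly Lemma~\ref{lem:epsilon}. The head analysis is where you genuinely diverge. The paper gets by with monotonicity alone: since $\log f$ is negative and decreasing on $(0,1)$, the one-sided Riemann-sum comparison gives $f(\varepsilon)\le\prod_{j=1}^{k}f(j/N)\big/\exp\bigl(N\int_0^{k/N}\log f(x)\,dx\bigr)\le 1$ for $k\le\varepsilon N$ (Lemma~\ref{lem:Riemann}), and since $y\mapsto\exp\bigl(N\int_0^y\log f(x)\,dx\bigr)$ is decreasing from $1$ with total variation at most $1$, the sum-versus-integral error in $k$ is at most $1/N=o(N^{-1/3})$ --- so the ``principal difficulty'' you flag (an $N$-dependent integrand in the Riemann sum) is dispatched without any rescaling; the asymptotics of the surviving integral are then quoted from Laplace's method (Proposition~\ref{prop:Laplace}, giving \eqref{eq:Gamma}). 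You instead sharpen the product estimate to $\sqrt{f(k/N)}\exp\bigl(N\int_0^{k/N}\log f(x)\,dx\bigr)\bigl(1+O(1/N)\bigr)$ by Euler--Maclaurin --- valid uniformly for $k\le\varepsilon N$ since $(\log f)''$ is bounded there --- and then perform the Laplace analysis by hand via the substitution $y=uN^{-1/3}$, simultaneously for the sum and for the integral. This is heavier but self-contained: the rescaling is needed anyway to absorb the extraneous $\sqrt{f}$ factor your sharper estimate introduces, and it delivers Theorem~\ref{thm:figure8} with its constant $\Gamma(1/3)/(3\xi)^{2/3}$ in the same stroke; the paper's route keeps the sum-to-integral comparison and the integral asymptotics modular, at the cost of a squeeze whose lower bound $f(\varepsilon)<1$ for fixed $\varepsilon$ strictly requires letting $\varepsilon$ shrink (which is, in effect, your concentration observation).
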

To prove Proposition~\ref{prop:approx_fig8}, we first show that even if we restrict the summation range of the right hand side of \eqref{eq:fig8} to a smaller one, the difference is very small.
\begin{lem}\label{lem:epsilon}
For any $0<\varepsilon<1$, we have
\begin{equation*}
  \sum_{k=0}^{N-1}
  \prod_{j=1}^{k}
  f\left(\frac{j}{N}\right)
  \underset{N\to\infty}{\sim}
  \sum_{k=0}^{\floor{\varepsilon N}}
  \prod_{j=1}^{k}
  f\left(\frac{j}{N}\right),
\end{equation*}
where $\floor{x}$ denotes the largest integer that does not exceed $x$.
\end{lem}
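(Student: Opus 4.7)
The plan is to split $S_N:=\sum_{k=0}^{N-1}\prod_{j=1}^{k}f(j/N)$ into the truncated sum $\tilde{S}_N:=\sum_{k=0}^{\floor{\varepsilon N}}\prod_{j=1}^{k}f(j/N)$ and the tail $R_N:=\sum_{k=\floor{\varepsilon N}+1}^{N-1}\prod_{j=1}^{k}f(j/N)$, and then to show that $R_N$ stays bounded while $\tilde{S}_N\to\infty$. These two facts give $S_N/\tilde{S}_N=1+R_N/\tilde{S}_N\to 1$, which is exactly the claimed asymptotic equivalence.

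First I would record the elementary properties of $f(x)=3-2\cosh(\xi x)$ on $[0,1]$. Since $e^{\xi}+e^{-\xi}=\frac{3+\sqrt{5}}{2}+\frac{3-\sqrt{5}}{2}=3$, one has $f(0)=1$ and $f(1)=0$; and since $f'(x)=-2\xi\sinh(\xi x)\le 0$, the function $f$ is strictly decreasing on $[0,1]$ with $0<f(x)\le 1$ on $[0,1)$. In particular $\rho:=f(\varepsilon)\in(0,1)$, which is the quantity that drives the tail estimate.

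To bound $R_N$, I would split each product at $j=\floor{\varepsilon N}$. The first $\floor{\varepsilon N}$ factors are each $\le 1$, while for $j>\varepsilon N$ the monotonicity of $f$ gives $f(j/N)\le\rho$. Hence $\prod_{j=1}^{k}f(j/N)\le\rho^{\,k-\floor{\varepsilon N}}$ for every $k\ge\floor{\varepsilon N}+1$, and a geometric series estimate yields $R_N\le \rho/(1-\rho)=O(1)$.

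For the required divergence of $\tilde{S}_N$, I would use $\log f(x)=-\xi^{2}x^{2}+O(x^{4})$ near $0$: there exist $C,\delta>0$ with $\log f(x)\ge -Cx^{2}$ on $[0,\delta]$. For $N$ large and $0\le k\le\floor{N^{2/3}}$, every ratio $j/N$ lies in $[0,\delta]$, so $\sum_{j=1}^{k}\log f(j/N)\ge -(C/N^{2})\sum_{j=1}^{k}j^{2}\ge -Ck^{3}/N^{2}\ge -C$, hence $\prod_{j=1}^{k}f(j/N)\ge e^{-C}$. Summing over these $k$ gives $\tilde{S}_N\ge e^{-C}(\floor{N^{2/3}}+1)\to\infty$. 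The only point that requires a little care is precisely this lower bound: since $R_N$ is merely $O(1)$ rather than $o(1)$, one genuinely needs the unboundedness of $\tilde{S}_N$, not any weaker estimate, to conclude asymptotic equivalence. Once both estimates are in hand the lemma follows immediately.
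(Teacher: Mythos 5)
Your proof is correct, but it distributes the work differently from the paper's. Both arguments use the same decomposition $S_N=\tilde S_N+R_N$ with $\tilde S_N=\sum_{k=0}^{\floor{\varepsilon N}}\prod_{j=1}^{k}f(j/N)$, but the paper proves the much stronger statement that the tail itself tends to $0$: it bounds each product with $k>\floor{\varepsilon N}$ by $\prod_{j=1}^{\floor{\varepsilon N}}f(j/N)$ and then, introducing an auxiliary $0<\varepsilon'<\varepsilon$, shows this common bound is at most $f(\varepsilon')^{\floor{\varepsilon N}-\floor{\varepsilon' N}}$, whence $R_N<Nf(\varepsilon')^{(\varepsilon-\varepsilon')N-1}\to 0$ exponentially fast; with $R_N=o(1)$ the paper then only needs the trivial lower bound $\tilde S_N\ge 1$ coming from the $k=0$ term. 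You instead discard the decay of the first $\floor{\varepsilon N}$ factors by bounding them by $1$, which yields only $R_N\le f(\varepsilon)/\bigl(1-f(\varepsilon)\bigr)=O(1)$ from the geometric series, and this forces you to establish $\tilde S_N\to\infty$ --- which you do correctly via $\log f(x)\ge -Cx^{2}$ near $0$ and the block $0\le k\le\floor{N^{2/3}}$ (for large $N$ these indices do lie below $\floor{\varepsilon N}$, so they really contribute to $\tilde S_N$). Your route is self-contained, and your divergence estimate is an elementary preview of the true growth $\tilde S_N\sim\Gamma(1/3)(3\xi)^{-2/3}N^{2/3}$ that the paper only obtains later by Laplace's method; the paper's route is shorter at this stage precisely because the exponential smallness of the tail makes any lower bound on $\tilde S_N$ beyond $\ge 1$ unnecessary. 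Both arguments are valid proofs of the lemma.
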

\begin{proof}
Since $0<f(x)<1$ for $0<x<1$, if $k>\varepsilon N$ we have
\begin{equation*}
  \prod_{j=1}^{k}
  f\left(\frac{j}{N}\right)
  <
  \prod_{j=1}^{\floor{\varepsilon N}}
  f\left(\frac{j}{N}\right).
\end{equation*}
Therefore
\begin{equation*}
\begin{split}
  0&<
  \sum_{k=0}^{N-1}
  \prod_{j=1}^{k}
  f\left(\frac{j}{N}\right)
  -
  \sum_{k=0}^{\floor{\varepsilon N}}
  \prod_{j=1}^{k}
  f\left(\frac{j}{N}\right)
  =
  \sum_{k=\floor{\varepsilon N}+1}^{N-1}
  \prod_{j=1}^{k}
  f\left(\frac{j}{N}\right)
  \\
  &<
  \sum_{k=\floor{\varepsilon N}+1}^{N-1}
  \prod_{j=1}^{\floor{\varepsilon N}}f\left(\frac{j}{N}\right)
  =
  (N-\floor{\varepsilon N}-1)
  \prod_{j=1}^{\floor{\varepsilon N}}f\left(\frac{j}{N}\right).
  \\
  &<
  N\prod_{j=1}^{\floor{\varepsilon N}}f\left(\frac{j}{N}\right).
\end{split}
\end{equation*}
For any $\varepsilon'$ with $0<\varepsilon'<\varepsilon$ we have
\begin{equation*}
\begin{split}
  \prod_{j=1}^{\floor{\varepsilon N}}f\left(\frac{j}{N}\right)
  &=
  \prod_{j=1}^{\floor{\varepsilon'N}}f\left(\frac{j}{N}\right)
  \times
  \prod_{j=\floor{\varepsilon'N}+1}^{\floor{\varepsilon N}}f\left(\frac{j}{N}\right)
  \\
  &<
  \prod_{j=1}^{\floor{\varepsilon'N}}f\left(\frac{j}{N}\right)
  \times
  f\left(\frac{\floor{\varepsilon'N}+1}{N}\right)^{\floor{\varepsilon N}-\floor{\varepsilon'N}}
  \\
  &<
  f\left(\frac{\floor{\varepsilon'N}+1}{N}\right)^{\floor{\varepsilon N}-\floor{\varepsilon'N}}
\end{split}
\end{equation*}
since $f$ is a decreasing function and $0<f(x)<1$ for $0<x<1$.
Since $\floor{\varepsilon'N}+1>\varepsilon'N$,
\begin{equation*}
  f\left(\frac{\floor{\varepsilon'N}+1}{N}\right)
  <
  f(\varepsilon').
\end{equation*}
Therefore we have
\begin{equation*}
  \sum_{k=0}^{N-1}
  \prod_{j=1}^{k}
  f\left(\frac{j}{N}\right)
  -
  \sum_{k=0}^{\floor{\varepsilon N}}
  \prod_{j=1}^{k}
  f\left(\frac{j}{N}\right)
  <
  Nf(\varepsilon')^{\floor{\varepsilon N}-\floor{\varepsilon'N}}
  <
  Nf(\varepsilon')^{(\varepsilon-\varepsilon')N-1}
\end{equation*}
since $\floor{\varepsilon N}+1>\varepsilon N$, $\floor{\varepsilon'N}\le\varepsilon'N$, and $0<f(\varepsilon')<1$.
\par
Now since 
$\displaystyle
\sum_{k=0}^{\floor{\varepsilon N}}\prod_{j=1}^{k}f\left(\frac{j}{N}\right)\ge1$,
we have
\begin{equation*}
\begin{split}
  \left|
    \frac%
      {\displaystyle
        \sum_{k=0}^{N-1}
        \prod_{j=1}^{k}
        f\left(\frac{j}{N}\right)
      }
      {\displaystyle
        \sum_{k=0}^{\floor{\varepsilon N}}
        \prod_{j=1}^{k}
        f\left(\frac{j}{N}\right)
      }
    -1
  \right|
  &=
  \frac%
    {\displaystyle
      \sum_{k=0}^{N-1}
      \prod_{j=1}^{k}
      f\left(\frac{j}{N}\right)
      -\sum_{k=0}^{\floor{\varepsilon N}}
      \prod_{j=1}^{k}
      f\left(\frac{j}{N}\right)
    }
    {\displaystyle
      \sum_{k=0}^{\floor{\varepsilon N}}
      \prod_{j=1}^{k}
      f\left(\frac{j}{N}\right)
    }
  \\
  &<
  \frac%
    {\displaystyle
      Nf(\varepsilon')^{(\varepsilon-\varepsilon')N-1}
    }
    {\displaystyle
      \sum_{k=0}^{\floor{\varepsilon N}}
      \prod_{j=1}^{k}
      f\left(\frac{j}{N}\right)
    }
  \le
  Nf(\varepsilon')^{(\varepsilon-\varepsilon')N-1}.
\end{split}
\end{equation*}
Since $0<f(\varepsilon')<1$, the rightmost side can be arbitrarily small as $N$ grows, proving the desired asymptotic formula.
\end{proof}
Next we replace the summation with a Riemann integral.
\begin{lem}\label{lem:Riemann}
For any $0<\varepsilon<1$, we have
\begin{multline*}
  f(\varepsilon)
  \int_{0}^{\varepsilon}
  \exp
  \left(
    N\int_{0}^{y}\log{f(x)}dx
  \right)
  dy
  <
  \frac{1}{N}
  \sum_{k=0}^{\floor{\varepsilon N}}
  \prod_{j=1}^{k}f\left(\frac{j}{N}\right)
  \\
  <
  \int_{0}^{\varepsilon}
  \exp
  \left(
    N\int_{0}^{y}\log{f(x)}dx
  \right)
  dy
  +
  \frac{1}{N}.
\end{multline*}
\end{lem}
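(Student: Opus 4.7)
The plan is to reduce both ends of the asserted inequality to standard Riemann-sum comparisons for the decreasing function $\log f$ and its exponentiated primitive. Write $F(y):=\int_{0}^{y}\log f(x)\,dx$ and $g(y):=\exp\bigl(NF(y)\bigr)$, and note that on $(0,1)$ the function $f$ is positive and strictly decreasing with $f(0)=1$, so $\log f<0$ there and $g$ is positive and strictly decreasing with $g(0)=1$.

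The first step is a pointwise bound on $\prod_{j=1}^{k}f(j/N)$ in terms of $g(k/N)$. I rewrite the product as $\exp\bigl(\sum_{j=1}^{k}\log f(j/N)\bigr)$ and use the monotonicity of $\log f$ on each subinterval $[(j-1)/N,j/N]$. Integrating the two-sided inequality $\log f(j/N)\le\log f(x)\le\log f\bigl((j-1)/N\bigr)$ and summing in $j$, with the identity $\log f(0)=0$ used after an index shift to discard the boundary term on the left, I arrive at
\begin{equation*}
NF(k/N)+\log f(k/N)\le\sum_{j=1}^{k}\log f\left(\frac{j}{N}\right)\le NF(k/N).
\end{equation*}
For $0\le k\le\floor{\varepsilon N}$ I can then bound $\log f(k/N)$ below by $\log f(\varepsilon)$, exponentiate, and obtain
\begin{equation*}
f(\varepsilon)\,g(k/N)\le\prod_{j=1}^{k}f\left(\frac{j}{N}\right)\le g(k/N).
\end{equation*}

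The second step sums over $k$ and compares with $\int_{0}^{\varepsilon}g(y)\,dy$. Because $g$ is decreasing, the pointwise inequalities $\int_{k/N}^{(k+1)/N}g(y)\,dy\le g(k/N)/N$ and (for $k\ge1$) $\int_{(k-1)/N}^{k/N}g(y)\,dy\ge g(k/N)/N$, together with $(\floor{\varepsilon N}+1)/N>\varepsilon$ and an explicit accounting for the isolated term $g(0)/N=1/N$, yield
\begin{equation*}
\int_{0}^{\varepsilon}g(y)\,dy\le\frac{1}{N}\sum_{k=0}^{\floor{\varepsilon N}}g\left(\frac{k}{N}\right)\le\int_{0}^{\varepsilon}g(y)\,dy+\frac{1}{N}.
\end{equation*}
Dividing the pointwise bounds from the first step by $N$, summing in $k$, and inserting these sum–integral comparisons on each side then gives precisely the two inequalities of the lemma.

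No step here is really delicate; the only thing to handle with care is the endpoint book-keeping, which is exactly what produces the prefactor $f(\varepsilon)$ on the left and the additive $1/N$ on the right.
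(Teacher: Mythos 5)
Your proof is correct and takes essentially the same route as the paper's: a two-sided Riemann-sum comparison of $\sum_{j=1}^{k}\log f(j/N)$ with $N\int_{0}^{k/N}\log f(x)\,dx$ using the monotonicity of $\log f$, yielding the pointwise bounds $f(\varepsilon)\,g(k/N)\le\prod_{j=1}^{k}f(j/N)\le g(k/N)$, followed by a sum--integral comparison for the decreasing function $g(y)=\exp\bigl(N\int_{0}^{y}\log f(x)\,dx\bigr)$. The differences from the paper's argument are only in endpoint bookkeeping.
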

\begin{proof}
First of all we recall $f(x)=3-2\cosh(\xi x)$, and note that the function $\log{f(x)}$ is
\begin{itemize}
\item
  negative, since $0<f(x)<1$,
\item
  decreasing, since
  $d\,\log{f(x)}/d\,x=-2\xi\sinh(\xi x)/f(x)<0$,
\end{itemize}
for $0<x<1$.
(Note also that $f(0)=1$ and $f(1)=3-2\cosh(\xi)=0$.)
\par
Therefore we have
\begin{equation*}
\begin{split}
  0>
  \frac{1}{N}\sum_{j=1}^{k}\log{f\left(\frac{j}{N}\right)}
  -
  \int_{0}^{k/N}\log{f(x)}dx
  &>
  \frac{1}{N}\log{f\left(\frac{k}{N}\right)}
  \\
  &\ge
  \frac{1}{N}\log{f(\varepsilon)}
\end{split}
\end{equation*}
for $k\le\varepsilon N$.
\par
Multiplying by $N$ and taking exponential, we have
\begin{equation*}
  1
  >
  \frac%
    {\displaystyle\prod_{j=1}^{k}f\left(\frac{j}{N}\right)}
    {
      \exp\left(N\displaystyle\int_{0}^{k/N}\log{f(x)}dx\right)
    }
  >
  f(\varepsilon),
\end{equation*}
and so
\begin{multline}\label{ineq:log}
  f(\varepsilon)\frac{1}{N}
  \sum_{k=0}^{\floor{\varepsilon N}}
  \exp\left(N\displaystyle\int_{0}^{k/N}\log{f(x)}dx\right)
  <
  \frac{1}{N}
  \sum_{k=0}^{\floor{\varepsilon N}}
  \prod_{j=1}^{k}f\left(\frac{j}{N}\right)
  \\
  <
  \frac{1}{N}
  \sum_{k=0}^{\floor{\varepsilon N}}
  \exp\left(N\displaystyle\int_{0}^{k/N}\log{f(x)}dx\right).
\end{multline}
Since the function $\exp\left(N\int_{0}^{y}\log{f(x)}dx\right)$ of $y$
is positive and decreasing for $0<y<1$, we have
\begin{equation*}
\begin{split}
  0
  &<
  \frac{1}{N}
  \sum_{k=0}^{\floor{\varepsilon N}}
  \exp
  \left(
    N\int_{0}^{k/N}\log{f(x)}dx
  \right)
  -
  \int_{0}^{\varepsilon}
  \exp
  \left(
    N\int_{0}^{y}\log{f(x)}dx
  \right)
  dy
  \\
  &<
  \frac{1}{N}
  \left(
    1-
    \exp\left(N\int_{0}^{\varepsilon}\log{f(x)}dx\right)
  \right)
  <
  \frac{1}{N}.
\end{split}
\end{equation*}
From \eqref{ineq:log} we have
\begin{multline*}
  f(\varepsilon)
  \int_{0}^{\varepsilon}
  \exp
  \left(
    N\int_{0}^{y}\log{f(x)}dx
  \right)
  dy
  <
  \frac{1}{N}
  \sum_{k=0}^{\floor{\varepsilon N}}
  \prod_{j=1}^{k}f\left(\frac{j}{N}\right)
  \\
  <
  \int_{0}^{\varepsilon}
  \exp
  \left(
    N\int_{0}^{y}\log{f(x)}dx
  \right)
  dy
  +
  \frac{1}{N},
\end{multline*}
completing the proof.
\end{proof}
Now we want to calculate the asymptotic behavior of the integral appearing in Lemma~\ref{lem:Riemann}.
To do that, recall Laplace's method \cite[\S~2.4]{Erdelyi:1956} to study asymptotic behaviors.
\begin{prop}\label{prop:Laplace}
Let us consider the following integral:
\begin{equation*}
  \int_{\alpha}^{\beta}g(t)e^{Nh(t)}\,dt,
\end{equation*}
where $h$ is a (suitably) differentiable real function and $g$ is a continuous complex function.
If $h'(\alpha)=h''(\alpha)=0$, $h^{(3)}(\alpha)<0$ and $h'(t)<0$ for $\alpha<t\le\beta$, then we have
\begin{equation*}
  \int_{\alpha}^{\beta}g(t)e^{Nh(t)}\,dt
  \underset{N\to\infty}{\sim}
  g(\alpha)
  \Gamma\left(\frac{1}{3}\right)
  \left(\frac{-2}{9h^{(3)}(\alpha)N}\right)^{1/3}
  e^{Nh(\alpha)}.
\end{equation*}
\end{prop}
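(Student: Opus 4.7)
The plan is to carry out the standard Laplace-method argument adapted to a degenerate critical point of order three. First I would use the hypothesis $h'(t)<0$ on $(\alpha,\beta]$ together with the Taylor expansion
\begin{equation*}
  h(t)=h(\alpha)+\frac{h^{(3)}(\alpha)}{6}(t-\alpha)^{3}+O\bigl((t-\alpha)^{4}\bigr)
\end{equation*}
at $\alpha$ (which uses $h'(\alpha)=h''(\alpha)=0$) to conclude that $h(\alpha)-h(t)$ is smooth, non-negative, and strictly increasing on $[\alpha,\beta]$, with a simple cube-root behavior at $\alpha$.

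Next I would localize. Choose a small $\delta>0$. On $[\alpha+\delta,\beta]$ we have $h(t)\le h(\alpha+\delta)=h(\alpha)-c$ for some constant $c>0$, so that $\int_{\alpha+\delta}^{\beta}g(t)e^{Nh(t)}\,dt=O\bigl(e^{N(h(\alpha)-c)}\bigr)$, which is exponentially smaller than the $N^{-1/3}e^{Nh(\alpha)}$ scale of the expected answer, hence negligible. It therefore suffices to analyze the integral over $[\alpha,\alpha+\delta]$.

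On that interval I would introduce the change of variable
\begin{equation*}
  \tau(t):=\bigl(h(\alpha)-h(t)\bigr)^{1/3}, \qquad \tau\ge 0.
\end{equation*}
By the Taylor expansion, $\tau$ is a $C^{1}$ diffeomorphism of a neighborhood of $\alpha$ onto a neighborhood of $0$, with
\begin{equation*}
  \frac{dt}{d\tau}\bigg|_{\tau=0}=\left(\frac{-6}{h^{(3)}(\alpha)}\right)^{1/3}.
\end{equation*}
The integral over $[\alpha,\alpha+\delta]$ becomes
\begin{equation*}
  e^{Nh(\alpha)}\int_{0}^{\tau_{0}} g\bigl(t(\tau)\bigr)\,\frac{dt}{d\tau}\,e^{-N\tau^{3}}\,d\tau,
\end{equation*}
with $\tau_{0}:=\tau(\alpha+\delta)$. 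Then I would rescale $u:=N^{1/3}\tau$, obtaining
\begin{equation*}
  N^{-1/3}e^{Nh(\alpha)}\int_{0}^{N^{1/3}\tau_{0}} g\bigl(t(N^{-1/3}u)\bigr)\,\frac{dt}{d\tau}\bigl(N^{-1/3}u\bigr)\,e^{-u^{3}}\,du,
\end{equation*}
and pass to the limit by dominated convergence (the factor $e^{-u^{3}}$ providing the domination, and continuity of $g\circ t$ and of $dt/d\tau$ at $0$ giving pointwise convergence to the value at $u=0$). The limit integral is $g(\alpha)\bigl(-6/h^{(3)}(\alpha)\bigr)^{1/3}\int_{0}^{\infty}e^{-u^{3}}\,du$, and the substitution $v=u^{3}$ gives $\int_{0}^{\infty}e^{-u^{3}}\,du=\frac{1}{3}\Gamma(1/3)$.

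Putting everything together and simplifying $\frac{1}{3}\bigl(-6/h^{(3)}(\alpha)\bigr)^{1/3}=\bigl(-2/(9h^{(3)}(\alpha))\bigr)^{1/3}$ yields the claimed asymptotic. The only genuinely delicate point is the change-of-variable step: one must verify that $\tau(t)$ is indeed $C^{1}$ at $t=\alpha$ (including the correct one-sided derivative) despite the cube root, which follows from factoring $h(\alpha)-h(t)=(t-\alpha)^{3}\varphi(t)$ with $\varphi$ smooth and $\varphi(\alpha)=-h^{(3)}(\alpha)/6>0$. Everything else is routine.
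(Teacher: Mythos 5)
Your argument is correct, and the constants check out: $\tfrac{1}{3}\bigl(-6/h^{(3)}(\alpha)\bigr)^{1/3}=\bigl(-2/(9h^{(3)}(\alpha))\bigr)^{1/3}$ and $\int_{0}^{\infty}e^{-u^{3}}\,du=\tfrac{1}{3}\Gamma(1/3)$ as you claim. The paper does not prove this proposition at all --- it simply cites it from Erd\'elyi's book on asymptotic expansions (\S 2.4) --- and your localization-plus-substitution argument is exactly the standard treatment given there, including the one genuinely delicate point you correctly isolate, namely that $\tau(t)=\bigl(h(\alpha)-h(t)\bigr)^{1/3}$ is a $C^{1}$ change of variable at the endpoint because $h(\alpha)-h(t)=(t-\alpha)^{3}\varphi(t)$ with $\varphi(\alpha)=-h^{(3)}(\alpha)/6>0$.
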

We use Proposition~\ref{prop:Laplace} for
$h(y):=\int_{0}^{y}\log{f(x)}dx$ with $f(x)=3-2\cosh(\xi x)$ and $g(x):=1$.
Since
\begin{itemize}
\item
$h'(y)=\log{f(y)}$, $h''(y)=f'(y)/f(y)=-2\xi\sinh(\xi y)/(f(y))$ and
\item
$h^{(3)}(0)=f''(0)f(0)-f'(0)^2/f(0)^2=-2\xi^2$,
\end{itemize}
we have
\begin{multline}\label{eq:Gamma}
  \int_{0}^{\varepsilon}
  \exp
  \left(
    N\int_{0}^{y}\log\bigl(f(x)\bigr)dx
  \right)
  dy
  \\
  \underset{N\to\infty}{\sim}
  \Gamma\left(\frac{1}{3}\right)
  \left(
    \frac{2}{18\xi^2N}
  \right)^{1/3}
  =
  \frac{\Gamma\left(\frac{1}{3}\right)}{(3\xi)^{2/3}}N^{-1/3}.
\end{multline}
\par
Now we can prove Proposition~\ref{prop:approx_fig8}.
\begin{proof}[Proof of Proposition~\ref{prop:approx_fig8}]
We have from Lemma~\ref{lem:Riemann}
\begin{multline*}
  f(\varepsilon)
  <
  \frac{\displaystyle
        \sum_{k=0}^{\floor{\varepsilon N}}
        \prod_{j=1}^{k}f\left(\frac{j}{N}\right)}
       {\displaystyle
        N\int_{0}^{\varepsilon}
        \exp
        \left(
          N\int_{0}^{y}\log{f(x)}dx
        \right)
        dy}
  <
  1+
  \frac{1}
       {\displaystyle
        N\int_{0}^{\varepsilon}
        \exp
        \left(
          N\int_{0}^{y}\log{f(x)}dx
        \right)
        dy}.
\end{multline*}
Since $f(\varepsilon)=1$ when $\varepsilon\to0$, and the denominators go to infinity when $N\to0$ from \eqref{eq:Gamma}, we have
\begin{equation*}
  \sum_{k=0}^{\floor{\varepsilon N}}
  \prod_{j=1}^{k}f\left(\frac{j}{N}\right)
  \underset{N\to\infty}{\sim}
  N
  \int_{0}^{\varepsilon}
  \exp
  \left(
    N\int_{0}^{y}\log{f(x)}dx
  \right)
  dy.
\end{equation*}
From Lemma~\ref{lem:epsilon} we have Proposition~\ref{prop:approx_fig8}.
\end{proof}
\par
Now we have Theorem~\ref{thm:figure8} from \eqref{eq:Gamma}.
%%%%%%%%%%%%%%%%%%%%%%%%%%%%%%%%%%%%%%%%%%%%%%%%%%%%%%%%%%%%%%%%%%%%%%%%%
\section{Torus knots}
In this section we prove Theorem~\ref{thm:torus}.
\par
Let $T(a,b)$ be the torus knot of type $(a,b)$ for integers $a$ and $b$ with $(a,b)=1$, $a>1$ and $b>1$.
Then we will show more general results as follows.
\begin{thm}\label{thm:Hikami}
Let $r$ be a real number.
\par
If $|r|<1/(ab)$ then
\begin{equation*}
  \lim_{N\to\infty}J_N\left(T(a,b);\exp(2\pi{r}\sqrt{-1}/N)\right)
  =
  \frac{1}{\Delta\left(T(a,b);\exp(2\pi{r}\sqrt{-1})\right)},
\end{equation*}
and if $r=1/(ab)$ then
\begin{equation*}
  J_N\left(T(a,b);\exp(2\pi{r}\sqrt{-1}/N)\right)
  \underset{N\to\infty}{\sim}
  e^{-\pi\sqrt{-1}/4}
  \frac{\sin(\pi/a)\sin(\pi/b)}{\sqrt{2}\sin(\pi/(ab))}
  \sqrt{N}.
\end{equation*}
\end{thm}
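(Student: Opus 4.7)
The plan is to start from the Rosso--Jones / Morton formula, which writes $J_N(T(a,b);q)$ as a finite theta-like sum over an integer index $k$ running through $N$ consecutive values, divided by $q^{N/2}-q^{-N/2}$. Substituting $q=\exp(2\pi r\sqrt{-1}/N)$ makes the denominator equal to $2\sqrt{-1}\sin(\pi r)$, which is nonzero throughout the closed interval $0<|r|\le 1/(ab)$ (the case $r=0$ being trivial since $J_N(K;1)=1=1/\Delta(K;1)$), while each summand takes the shape $\exp\bigl(\pi r\sqrt{-1}(\alpha k^2+\beta k+\gamma)/N\bigr)$ for integer constants $\alpha,\beta,\gamma$ depending only on $a$ and $b$. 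The entire sum is then a unit-step Riemann sum of a Fresnel weight whose scale is controlled by $r$.

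For the convergent regime $|r|<1/(ab)$, I would apply Poisson summation to the $k$-sum. The hypothesis $|r|<1/(ab)$ is what guarantees that the effective width of the Gaussian in the dual variable is large enough that only finitely many dual-lattice points (bounded in number by a function of $a$ and $b$ alone) give non-negligible contributions as $N\to\infty$. Those surviving contributions collapse to a finite combination of exponentials which, after dividing by $2\sqrt{-1}\sin(\pi r)$, should rearrange into the four $\sinh$-type factors exhibiting $1/\Delta\bigl(T(a,b);e^{2\pi r\sqrt{-1}}\bigr)$ in the factored form already recalled in the introductory remark.

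For the boundary case $r=1/(ab)$, the Poisson-dual expansion degenerates because the dual Gaussian width diverges. Here I would rescale $k=u\sqrt{N}$ and regard the sum as a Riemann sum approximating a genuine Fresnel integral of the form $\int_{-\infty}^{\infty}e^{\pi\sqrt{-1}cu^{2}}du=e^{\pi\sqrt{-1}/4}\sqrt{\pi/|c|}$ for a positive real constant $c$ computable from $a$ and $b$. The change of variable $dk=\sqrt{N}\,du$ is the source of the $\sqrt{N}$ factor in the asymptotic. The two linear-in-$k$ phases in the Rosso--Jones summand produce, after integration, the difference of exponentials that assembles the numerator $\sin(\pi/a)\sin(\pi/b)$, and dividing by the prefactor $2\sqrt{-1}\sin(\pi/(ab))$ yields the stated constant $e^{-\pi\sqrt{-1}/4}\sin(\pi/a)\sin(\pi/b)/\bigl(\sqrt{2}\sin(\pi/(ab))\bigr)$.

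The main obstacle is the Riemann-sum-to-integral step at $r=1/(ab)$: the limit integrand is only conditionally convergent, so dominated-convergence arguments do not apply and the oscillatory error must be controlled uniformly in $N$. A workable approach is to truncate to $|k|\le N^{1/2+\delta}$ for small $\delta>0$, bound the tail by summation by parts, and then approximate the truncated sum by the integral using a Taylor expansion of the phase together with second-order van der Corput type estimates. In the first part, the parallel point is to verify uniform decay of the non-central Poisson-dual Gaussians on every compact subinterval of $|r|<1/(ab)$; this is standard but requires care as $|r|$ approaches the endpoint $1/(ab)$.
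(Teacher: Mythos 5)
Your starting point (the Rosso--Jones sum, with denominator $q^{N/2}-q^{-N/2}=2\sqrt{-1}\sin(\pi r)$) is sound and is genuinely different from the paper, which instead quotes the contour-integral representation \eqref{eq:int_C} from \cite{Murakami:INTJM62004} and proves the much more precise Proposition~\ref{prop:asymptotic} by shifting the contour to the saddle point $\pi r\sqrt{-1}$ and collecting residues of $\tau_{T(a,b)}$. But as sketched, both halves of your analysis misidentify where the answer comes from. Write the summand as $q^{abk^2}\bigl(q^{(a+b)k+1/2}-q^{(a-b)k-1/2}\bigr)$ and complete the square: since $\frac{(a+b)^2}{4ab}-\frac12=\frac{(a-b)^2}{4ab}+\frac12$, the two shifted Fresnel sums carry the \emph{same} constant prefactor, so their $\Theta(\sqrt{N})$ principal (dual-frequency-zero) Poisson contributions cancel exactly. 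For $|r|<1/(ab)$ the limit $1/\Delta$ therefore does not come from ``finitely many surviving dual-lattice points''; it sits in the $O(1)$ remainder left after this cancellation, and extracting it --- in particular producing the factor $\sinh(abr\pi\sqrt{-1})$ in the denominator of $1/\Delta$ --- requires an Euler--Maclaurin/$L$-value analysis of the associated mean-zero period-$2ab$ coefficient function, or equivalently the value $\tau_{T(a,b)}(\pi r\sqrt{-1})$ of the regular part at the saddle in the paper's approach. Your sketch asserts the answer rather than supplying this mechanism.

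The gap is more serious at $r=1/(ab)$. Rescaling $k=u\sqrt{N}$ about the center of the sum gives the phase $e^{2\pi\sqrt{-1}abru^2}$, but the coefficient $q^{(a+b)k+1/2}-q^{(a-b)k-1/2}$ then tends to $0$ (it is $O(u/\sqrt{N})$ and odd in $u$ to leading order), so the central region contributes only $O(1)$: the ``$dk=\sqrt{N}\,du$'' factor is killed by the vanishing of the limiting integrand, and this region accounts only for the convergent series in \eqref{eq:not_in_P}, not for the $\sqrt{N}$ growth. The true source of the $\sqrt{N}$ term is the \emph{boundary} $k\approx\pm N/2$ of the summation range: when $abr=1$ the dual frequencies $m=\pm1$ have their stationary points $t=mN/(2abr)$ exactly at the endpoints, contributing half a Fresnel integral weighted by the endpoint values $e^{\pm\pi\sqrt{-1}(a\pm b)r}=e^{\pm\pi\sqrt{-1}(1/a\pm 1/b)}$ --- which is precisely where $\sin(\pi/a)\sin(\pi/b)$ comes from (these factors cannot arise near $k=0$, where both linear phases tend to $1$). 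In the paper this is the half-residue of the pole of $\tau_{T(a,b)}$ sitting at the saddle, i.e.\ the $\sqrt{N}$ term of \eqref{eq:not_in_P} obtained from the detoured contour $\check{C}_r$. Without this endpoint contribution your method would predict a bounded sequence at $r=1/(ab)$, contradicting the statement; the proposal has to be reorganized around endpoint stationary phase (or the contour detour) before it can be completed.
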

In fact we will prove yet more general results.
\par
Put
\begin{equation*}
  \tau_{K}(z)
  :=
  \frac{2\sinh{z}}{\Delta\left(K;e^{2z}\right)}
\end{equation*}
for a knot $K$.
Note that
\begin{equation*}
  \tau_{T(a,b)}(z)
  =
  \frac{2\sinh(az)\sinh(bz)}{\sinh(abz)}.
\end{equation*}
Let $\mathcal{P}$ be the set of poles of $\tau_{T(a,b)}(z)$, that is,
\begin{equation*}
  \mathcal{P}
  :=
  \left\{
    \frac{k\pi\sqrt{-1}}{ab}
    \Big|
    k\in\Z,
    a\nmid k,b\nmid k
  \right\}.
\end{equation*}
We define $\gamma_{a,b,r}(k)$ to be $(2k)!$ times the coefficient of the Laurent expansion of $\tau_{T(a,b)}(z)$ around $z=\pi{r}\sqrt{-1}$ of degree $2k$.
Note that it is equal to $d^{2k}\,\tau_{T(a,b)}(w)/d\,w^{2k}\bigr|_{w=\pi{r}\sqrt{-1}}$ if $\pi{r}\sqrt{-1}\not\in\mathcal{P}$.
Put
\begin{align*}
  \mathcal{R}\bigl(T(a,b);j\bigr)
  &:=
  \frac{4}{ab}\sin\left(\frac{j\pi}{a}\right)\sin\left(\frac{j\pi}{b}\right),
  \\
  \intertext{and}
  \mathcal{CS}\bigl(T(a,b);j\bigr)
  &:=
  \exp\left(\frac{j^2\pi\sqrt{-1}}{2ab}\right).
\end{align*}
Then we have the following asymptotic expansions.
\begin{prop}\label{prop:asymptotic}
Assume that $r$ is a real number but not an integer.
If $\pi{r}\sqrt{-1}\not\in\mathcal{P}$, then
\begin{equation}\label{eq:in_P}
\begin{split}
  &J_N\left(T(a,b);\exp\left(\frac{2\pi{r}\sqrt{-1}}{N}\right)\right)
  \\
  =&
  \frac{e^{\left(ab-\frac{a}{b}-\frac{b}{a}\right)\frac{\pi{r}\sqrt{-1}}{2N}}}
       {\sin(\pi{r})}
  \left\{
    \frac{1}{2\sqrt{-1}}
    \sum_{k=0}^{\infty}
    \frac{\gamma_{a,b,r}(k)}{k!}
    \left(\frac{\pi{r}\sqrt{-1}}{2abN}\right)^k
  \right.
  \\
  &\quad+
  \left.
  \frac{\sqrt{abN}e^{-\frac{Nabr\pi\sqrt{-1}}{2}}}
       {2\sqrt{2r}e^{\frac{\pi\sqrt{-1}}{4}}}
    \sum_{j=1}^{\floor{abr}}
    (-1)^{Nj+j+1}
    \mathcal{R}\bigl(T(a,b);j\bigr)
    \mathcal{CS}\bigl(T(a,b);j\bigr)^{-N/r}
  \right\}.
\end{split}
\end{equation}
If $\pi{r}\sqrt{-1}\in\mathcal{P}$, then
\begin{equation}\label{eq:not_in_P}
\begin{split}
  &J_N\left(T(a,b);\exp\left(\frac{2\pi{r}\sqrt{-1}}{N}\right)\right)
  \\
  =&
  \frac{e^{\left(ab-\frac{a}{b}-\frac{b}{a}\right)\frac{\pi{r}\sqrt{-1}}{2N}}}
       {\sin(\pi{r})}
  \left\{
    \frac{1}{2\sqrt{-1}}
    \sum_{k=0}^{\infty}
    \frac{\gamma_{a,b,r}(k)}{k!}
    \left(\frac{\pi{r}\sqrt{-1}}{2abN}\right)^k
  \right.
  \\
  &\quad
  -
  \frac{\sqrt{N}}
       {\sqrt{2abr}e^{\frac{\pi\sqrt{-1}}{4}}}
  \times
  (-1)^{abr}\sin(ar\pi)\sin(br\pi)
  \\
  &\quad
  \left.
    +
    \frac{\sqrt{abN}e^{-\frac{Nabr\pi\sqrt{-1}}{2}}}
         {2\sqrt{2r}e^{\frac{\pi\sqrt{-1}}{4}}}
    \sum_{j=1}^{abr-1}
    (-1)^{Nj+j+1}
    \mathcal{R}\bigl(T(a,b);j\bigr)
    \mathcal{CS}\bigl(T(a,b);j\bigr)^{-N/r}
  \right\}.
\end{split}
\end{equation}
\end{prop}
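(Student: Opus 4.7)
The plan is to begin with an explicit Rosso--Jones--Morton formula that writes $J_N(T(a,b);q)$ as a finite Gauss sum with summand $q^{abk^{2}+\cdots}-q^{abk^{2}-\cdots}$ over $N$ consecutive (half-)integers $k$. After substituting $q=\exp(2\pi r\sqrt{-1}/N)$, the combination $(ab-a/b-b/a)\pi r\sqrt{-1}/(2N)$ and the denominator $q^{N/2}-q^{-N/2}=2\sqrt{-1}\sin(\pi r)$ may be pulled out as the common prefactor appearing in \eqref{eq:in_P}--\eqref{eq:not_in_P}, leaving a finite Gaussian-type sum with quadratic phase $\pi r\sqrt{-1}\,ab\,k^{2}/N$.

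The main tool is the Poisson summation formula applied to this Gaussian sum. The $m$-th Fourier mode produces an oscillatory integral whose quadratic phase has a stationary point at $k_{m}^{\ast}=mN/(2abr)$. For $1\le m\le\floor{abr}$ this point lies strictly inside the summation range, and the stationary-phase method yields a contribution with the $\sqrt{N}$ growth, the $e^{-\pi\sqrt{-1}/4}$ Fresnel phase, the amplitude $\mathcal{R}(T(a,b);m)$ coming from the two $\sinh$ factors of $\tau_{T(a,b)}$ evaluated at the saddle, and the Chern--Simons phase $\mathcal{CS}(T(a,b);m)^{-N/r}$ coming from evaluating the phase at $k_{m}^{\ast}$. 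For $|m|>\floor{abr}$ the integrand has no interior stationary point, and repeated integration by parts gives $O(N^{-\infty})$.

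The $m=0$ mode has no linear phase; after the rescaling $z=\pi r\sqrt{-1}\,k/N$ the integrand becomes proportional to $\tau_{T(a,b)}(z)$ times a factor that is analytic in $1/N$, and Laurent expansion of $\tau_{T(a,b)}$ about $z=\pi r\sqrt{-1}$, combined with the resulting Gaussian moment integrals, produces by the very definition of $\gamma_{a,b,r}(k)$ the first series in both \eqref{eq:in_P} and \eqref{eq:not_in_P}.

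The main obstacle is the boundary case $\pi r\sqrt{-1}\in\mathcal{P}$, when $abr$ is a positive integer with $a\nmid abr$ and $b\nmid abr$. Two delicate things happen at once: the stationary point of the $m=abr$ Poisson integral sits exactly on the endpoint of the summation range, so only half of its Fresnel contribution survives; and $\tau_{T(a,b)}(z)$ acquires a simple pole at $z=\pi r\sqrt{-1}$, so the $m=0$ Laurent expansion now has a principal part that must be handled as a principal-value contour. A careful contour deformation shows that the half-Fresnel boundary contribution and the residue at this pole combine to produce exactly the extra $\sqrt{N}\sin(ar\pi)\sin(br\pi)$ term in \eqref{eq:not_in_P} while truncating the $\mathcal{CS}$ sum at $j=abr-1$. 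Managing this half-saddle versus residue bookkeeping rigorously is the delicate step of the proof.
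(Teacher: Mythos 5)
Your route (Rosso--Jones finite Gauss sum, then Poisson summation and stationary phase) is genuinely different from the paper's, which starts from the exact contour-integral representation $J_N=\Phi_{a,b,r}(N)\int_{C}\tau_{T(a,b)}(z)e^{Nf_{a,b,r}(z)}\,dz$ of \cite{Murakami:INTJM62004}, shifts $C$ to the parallel line through the saddle $\pi r\sqrt{-1}$ of $f_{a,b,r}$, collects the residues of $\tau_{T(a,b)}$ crossed along the way (these are the $\mathcal{R}\cdot\mathcal{CS}^{-N/r}$ terms), and evaluates the shifted integral by expanding the symmetrized function $B_{a,b,r}(w)$ in powers of $w^{2}$ and computing Gaussian moments. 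Your dictionary ``interior stationary point of the $m$-th mode $\leftrightarrow$ residue at the $j$-th pole'' is the right one, and your account of the boundary case is consistent with the paper: the half-Fresnel contribution of the stationary point sitting on the endpoint is exactly the paper's half-residue from the small detour of $\check{C}_r$ below $\pi r\sqrt{-1}$, and one can check that one half of the would-be $j=abr$ summand reproduces the middle term of \eqref{eq:not_in_P} precisely.

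The genuine gap is in your treatment of the non-oscillatory part. First, for a sharply truncated sum the modes with no interior stationary point are \emph{not} $O(N^{-\infty})$: integration by parts leaves endpoint terms of size $O(1/|m|)$, and it is only the resummation of these boundary terms over all $m\in\Z$ (producing cotangent-type functions of the phase derivative at the endpoints) that yields an $O(1)$ contribution --- indeed this is where the factor $1/\sinh(ab\pi r\sqrt{-1})$ in $\tau_{T(a,b)}(\pi r\sqrt{-1})$, hence the $1/\Delta$ limit, must come from. Second, the $m=0$ mode cannot produce the series $\sum_k\gamma_{a,b,r}(k)/k!\,(\pi r\sqrt{-1}/(2abN))^k$ in the way you describe: its interior stationary point lies at the \emph{centre} of the summation range (corresponding to $z\approx 0$ after your rescaling, not to $z=\pi r\sqrt{-1}$, which is where the \emph{endpoints} land), and the amplitude $q^{1/2}-q^{-1/2}=2\sqrt{-1}\sin(\pi r/N)$ vanishes there to order $1/N$, so the entire interior stationary-phase expansion of the $m=0$ mode is $O(N^{-1/2})$ and tends to $0$. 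Since $\gamma_{a,b,r}(k)$ is by definition built from the Laurent expansion of $\tau_{T(a,b)}$ at $\pi r\sqrt{-1}$, the first series in \eqref{eq:in_P}--\eqref{eq:not_in_P} has to be extracted from the resummed endpoint contributions (equivalently, from the saddle at $z=\pi r\sqrt{-1}$ of the auxiliary Gaussian variable in the paper's representation), and this bookkeeping --- which is the actual content of the proposition, since the oscillatory terms are comparatively easy --- is missing from your sketch. A lesser issue: the proposition is stated as an identity whose smooth part is a full expansion in $1/N$ with the specific coefficients $\gamma_{a,b,r}(k)$, so ``stationary phase to leading order'' is not enough; you need the exact Gaussian-moment computation (or Watson's lemma with all terms), as in \eqref{eq:B} and \eqref{eq:C} of the paper.
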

\begin{rem}
In \cite{Kashaev/Tirkkonen:ZAPNS2000}, R.~Kashaev and O.~Tirkkonen proved the following asymptotic expansion, which corresponds to the case $r=1$.
See also \cite{Hikami/Kirillov:PHYLB2003}.
\begin{equation*}
\begin{split}
  &J_N\left(T(a,b);\exp\left(\frac{2\pi\sqrt{-1}}{N}\right)\right)
  \\
  =&
  e^{\left(ab-\frac{a}{b}-\frac{b}{a}\right)
     \frac{\pi\sqrt{-1}}{2N}}
  \left\{
    \frac{1}{4}
    \sum_{k=0}^{\infty}\frac{\eta_{a,b}(k+1)}{(k+1)!}
    \left(\frac{\pi\sqrt{-1}}{2abN}\right)^{k}
  \right.
  \\
  &+
  \left.
    \frac{N^{\frac{3}{2}}
          e^{\frac{\pi\sqrt{-1}}{4}}
          e^{-\frac{Nab\pi\sqrt{-1}}{2}}}
         {4\sqrt{2ab}}
    \sum_{j=1}^{ab-1}
    (-1)^{(N-1)j}
    j^2
    \mathcal{R}\bigl(T(a,b);j\bigr)
    \mathcal{CS}\bigl(T(a,b);j\bigr)^{-N}
  \right\},
\end{split}
\end{equation*}
where $\eta_{a,b}(k)$ is the $2k$-th derivative of $z\tau_{T(a,b)}(z)$ at $z=0$.
Note that it coincides with the $(2k-1)$st derivative of $(-1)^{ab+a+b}\tau_{T(a,b)}(z)$ at $z=\pi\sqrt{-1}$.
Note also that Kashaev and Tirkkonen use the $ab$-framed version of the colored Jones polynomial.
In our case we use the $0$-framing version and so we have to renormalize it by framing factor $q^{ab\left(N^2-1\right)/4}$.
(See \cite{Morton:MATPC95} for the framing dependence.)
\par
J.~Dubois and Kashaev \cite{Dubois/Kashaev:MATHA2007} show that $\eta_{a,b}(k)$ is a finite type invariant for every $k$.
They also showed that the square of $\mathcal{R}\bigl(T(a,b);j\bigr)$ is the Reidemeister torsion and $\mathcal{CS}\bigl(T(a,b);j\bigr)$ is the Chern--Simons invariant corresponding to a representation of $\pi_1\left(S^3\setminus{T(a,b)}\right)$ into $SL(2;\C)$.
\end{rem}
\begin{proof}[Proof of Theorem~\ref{thm:Hikami} by using Proposition~\ref{prop:asymptotic}]
If $|r|<1/(ab)$ the second term of \eqref{eq:in_P} vanishes.
So we have
\begin{equation*}
\begin{split}
  J_N\left(T(a,b);\exp\left(\frac{2\pi{r}\sqrt{-1}}{N}\right)\right)
  &=
  \frac{e^{\left(ab-\frac{a}{b}-\frac{b}{a}\right)\frac{\pi{r}\sqrt{-1}}{2N}}}
       {\sin(\pi{r})}
  \frac{1}{2\sqrt{-1}}
  \sum_{k=0}^{\infty}
  \frac{\gamma_{a,b,r}(k)}{k!}
  \left(\frac{\pi{r}\sqrt{-1}}{2abN}\right)^k
  \\
  &\xrightarrow{N\to\infty}
  \frac{\gamma_{a,b,r}(0)}{2\sqrt{-1}\sin(\pi{r})}
  \\
  &=
  \frac{\tau_{T(a,b)}(\pi{r}\sqrt{-1})}{2\sqrt{-1}\sin(\pi{r})}
  \\
  &=
  \frac{\sinh(ar\pi\sqrt{-1})\sinh(br\pi\sqrt{-1})}
       {\sinh(abr\pi\sqrt{-1})\sinh(r\pi\sqrt{-1})}
  \\
  &=
  \frac{1}{\Delta\bigl(T(a,b);\exp(\pi{r}\sqrt{-1})\bigr)}.
\end{split}
\end{equation*}
\par
If $abr=1$, then the third term of \eqref{eq:not_in_P} vanishes and we have
\begin{equation*}
\begin{split}
  &J_N\left(T(a,b);\exp\left(\frac{2\pi{r}\sqrt{-1}}{N}\right)\right)
  \\
  =&
  \frac{e^{\left(ab-\frac{a}{b}-\frac{b}{a}\right)\frac{\pi{r}\sqrt{-1}}{2N}}}
       {\sin(\pi{r})}
  \left\{
    \frac{1}{2\sqrt{-1}}
    \sum_{k=0}^{\infty}
    \frac{\gamma_{a,b,r}(k)}{k!}
    \left(\frac{\pi{r}\sqrt{-1}}{2abN}\right)^k
  \right.
  \\
  &
  \phantom{
    \frac{e^{\left(ab-\frac{a}{b}-\frac{b}{a}\right)\frac{\pi{r}\sqrt{-1}}{2N}}}
         {\sin(\pi{r})}}
  \quad
  -
  \left.
    \frac{\sqrt{N}}
         {\sqrt{2abr}e^{\frac{\pi\sqrt{-1}}{4}}}
    \times
    (-1)^{abr}\sin(ar\pi)\sin(br\pi)
  \right\}
  \\
  \overset{N\to\infty}{\sim}&
  \frac{\sin(ar\pi)\sin(br\pi)}{\sin(\pi{r})}
  \frac{\sqrt{N}}{\sqrt{2}e^{\frac{\pi\sqrt{-1}}{4}}}.
\end{split}
\end{equation*}
\end{proof}
\begin{rem}
If $r$ is real non-integer number with $|r|>1/(ab)$, the sequence $\{J_N\bigl(T(a,b);\exp(2\pi{r}\sqrt{-1}/N)\bigr)\}_{N=2,3,\dots}$ oscillates since the last terms in \eqref{eq:in_P} and \eqref{eq:not_in_P} survive and the other terms converge as above.
Note that in this case $\lim_{N\to\infty}\log\Bigl(J_N\bigl(T(a,b);\exp(2\pi{r}\sqrt{-1}/N)\bigr)\Big)/N=0$ since $J_N\bigl(T(a,b);\exp(2\pi{r}\sqrt{-1}/N)\bigr)$ grows at most polynomially.
\end{rem}
\begin{rem}
Combining the results in \cite{Murakami:INTJM62004} we have the following results about the sequence $\{J_N\bigl(T(a,b);\exp(c/N)\bigr)\}_{N=2,3,\dots}$.
\begin{itemize}
\item
If $\Re{c}>0$ and $|c|>2\pi/(ab)$, or $\Re{c}=0$ and $|c|<2\pi/(ab)$, then it converges and
\begin{equation*}
  \lim_{N\to\infty}
  J_N\bigl(T(a,b);\exp(c/N)\bigr)
  =
  \frac{1}{\Delta\bigl(T(a,b);\exp{c}\bigr)}.
\end{equation*}
\item
If $\Re{c}<0$ and $|c|>2\pi/(ab)$, then it grows exponentially and
\begin{equation*}
  \lim_{N\to\infty}
  \frac{\log{J_N\bigl(T(a,b);\exp(c/N)\bigr)}}{N}
  =
  \left(
    1-\frac{\pi\sqrt{-1}}{abc}-\frac{abc}{4\pi\sqrt{-1}}
  \right)
  \pi\sqrt{-1}.
\end{equation*}
\item
If $c=\pm2\pi\sqrt{-1}/(ab)$, then it grows polynomially.
\item
If $c$ is purely imaginary, $|c|/(2\pi)$ is not an integer, and $|c|>2\pi/(ab)$, then it oscillates.
\end{itemize}
Note that in \cite{Murakami:INTJM62004} the second author stated results only for the case where $\Im{c}>0$, but we can prove the first two formulas above because
\begin{equation*}
  \overline{J_N\bigl(K;\exp(c/N)\bigr)}
  =
  J_N\bigl(K;\exp(\overline{c}/N)\bigr).
\end{equation*}
This was pointed out by A.~Gibson.
\end{rem}
\begin{proof}[Proof of Proposition~\ref{prop:asymptotic}]
From \cite[\S~2]{Murakami:INTJM62004}
(see also \cite{Morton:MATPC95,Kashaev/Tirkkonen:ZAPNS2000}), we have
\begin{equation}\label{eq:int_C}
  J_N\left(T(a,b);e^{\frac{2\pi{r}\sqrt{-1}}{N}}\right)
  =
  \Phi_{a,b,r}(N)\int_{C}\tau_{T(a,b)}(z)e^{Nf_{a,b,r}(z)}\,dz,
\end{equation}
where $C$ is the line that is obtained from the real axis by a $\pi/4$-rotation (Figure~\ref{fig:C}),
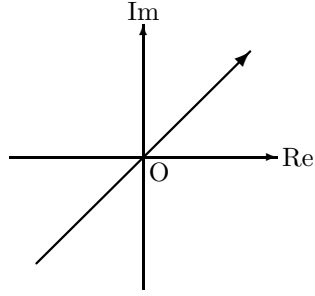
\begin{figure}[h]
\begin{picture}(100,100)(-50,-50)
  \put(-50,  0){\vector(1,0){100}}% x-axis
  \put(  0,-50){\vector(0,1){100}}% y-axis
  \put( 52,  0){\makebox(0,0)[l]{Re}}
  \put(  0, 52){\makebox(0,0)[b]{Im}}
  \thicklines
  \put(-40,-40){\vector(1,1){ 80}}% C
  \put(  2, -2){\makebox(0,0)[tl]{O}}% origin
\end{picture}
\caption{Contour $C$.}
\label{fig:C}
\end{figure}
\begin{equation*}
  f_{a,b,r}(z)
  :=
  ab\left(z-\frac{z^2}{2\pi{r}\sqrt{-1}}\right),
\end{equation*}
and
\begin{equation*}
  \Phi_{a,b,r}(N)
  :=
  g_{a,b,r}\sqrt{N}
  e^{-\left(ab\left(N^2-1\right)+\frac{a}{b}+\frac{b}{a}\right)
     \frac{\pi{r}\sqrt{-1}}{2N}},
\end{equation*}
with
\begin{equation*}
  g_{a,b,r}
  :=
  \frac{\sqrt{ab}}{2\pi\sqrt{2r}e^{\pi\sqrt{-1}/4}\sinh(\pi{r}\sqrt{-1})}.
\end{equation*}
Note that since $r\not\in\Z$, $g_{a,b,r}$ is well-defined and so is the right hand side of \eqref{eq:int_C}.
\par
We first assume that $\pi{r}\sqrt{-1}\not\in\mathcal{P}$.
In this case as in \cite[Page~550]{Murakami:INTJM62004}, we can replace the contour $C$ with $C_r$ where $C_r$ is parallel to $C$ and passes through $\pi{r}\sqrt{-1}$ as \cite[Page~551]{Murakami:INTJM62004}.
\begin{equation}\label{eq:int_C_r}
\begin{split}
  &\int_{C}\tau_{T(a,b)}(z)e^{Nf_{a,b,r}(z)}\,dz
  \\
  =&
  \int_{C_r}
  \tau_{T(a,b)}(z)
  e^{Nf_{a,b,r}(z)}
  \,dz
  +
  2\pi\sqrt{-1}
  \sum_{\stackrel{z_k\in\mathcal{P}}{0<\Im{z_k}<\pi{r}}}
  \Res
  \left(
    \tau_{T(a,b)}(z)e^{Nf_{a,b,r}(z)};z=z_k
  \right)
  \\
  =&
  \int_{C_r}
  \tau_{T(a,b)}(z)
  e^{Nf_{a,b,r}(z)}
  \,dz
  \\
  &+
  2\pi\sqrt{-1}
  \sum_{j=1}^{\floor{abr}}
  (-1)^{j+1}
  \frac{2\sin\left(\frac{j\pi}{a}\right)
         \sin\left(\frac{j\pi}{b}\right)}{ab}
  e^{Nj\pi\sqrt{-1}(1-\frac{j}{2abr})},
\end{split}
\end{equation}
where $\Res(F(x);x=x_0)$ is the residue of $F(x)$ at the point $x_0$.
\begin{rem}
Note that \cite[(2.2)]{Murakami:INTJM62004} is missing $2\pi\sqrt{-1}$ in the second term.
(So \eqref{eq:int_C_r} is correct.)
It does not matter to the results in \cite{Murakami:INTJM62004}, but is essential in the calculation of the current paper.
The second author thanks A.~Gibson, who pointed out this error.
\end{rem}
We will calculate the integral along $C_r$ in \eqref{eq:int_C_r}.
Putting $w:=z-\pi{r}\sqrt{-1}$, we have
\begin{equation*}
\begin{split}
  \int_{C_r}\tau_{T(a,b)}(z)e^{Nf_{a,b,r}(z)}\,dz
  &=
  \int_{C}
  \tau_{T(a,b)}(w+\pi{r}\sqrt{-1})e^{Nf_{a,b,r}(w+\pi{r}\sqrt{-1})}\,dw.
  \\
  &=
  \int_{C}
  \tau_{T(a,b)}(w+\pi{r}\sqrt{-1})
  e^{N\left(-\frac{ab}{2\pi{r}\sqrt{-1}}w^2+\frac{abr\pi\sqrt{-1}}{2}\right)}
  \,dw.
\end{split}
\end{equation*}
But since
\begin{equation*}
\begin{split}
  &\int_{C}
  \tau_{T(a,b)}(w+\pi{r}\sqrt{-1})
  e^{N\left(-\frac{ab}{2\pi{r}\sqrt{-1}}w^2+\frac{abr\pi\sqrt{-1}}{2}\right)}
  \,dw
  \\
  =&
  -
  \int_{-C}
  \tau_{T(a,b)}(-w+\pi{r}\sqrt{-1})
  e^{N\left(-\frac{ab}{2\pi{r}\sqrt{-1}}w^2+\frac{abr\pi\sqrt{-1}}{2}\right)}
  \,dw
  \\
  =&
  \int_{C}
  \tau_{T(a,b)}(-w+\pi{r}\sqrt{-1})
  e^{N\left(-\frac{ab}{2\pi{r}\sqrt{-1}}w^2+\frac{abr\pi\sqrt{-1}}{2}\right)}
  \,dw
\end{split}
\end{equation*}
we have
\begin{multline}\label{eq:int_C_r->C}
  \int_{C_r}\tau_{T(a,b)}(z)e^{Nf_{a,b,r}(z)}\,dz
  \\
  =
  \frac{1}{2}
  \int_{C}
  \left\{
    \tau_{T(a,b)}(w+\pi{r}\sqrt{-1})
    +
    \tau_{T(a,b)}(-w+\pi{r}\sqrt{-1})
  \right\}
  e^{N\left(-\frac{ab}{2\pi{r}\sqrt{-1}}w^2+\frac{abr\pi\sqrt{-1}}{2}\right)}
  \,dw,
\end{multline}
where $-C$ is obtained from $C$ by reversing the orientation.
If we put
\begin{equation*}
  B_{a,b,r}(w)
  :=
  \frac{1}{2}
  \left\{
    \tau_{T(a,b)}(w+\pi{r}\sqrt{-1})
    +
    \tau_{T(a,b)}(-w+\pi{r}\sqrt{-1})
  \right\},
\end{equation*}
we have
\begin{equation}\label{eq:B_expansion}
  B_{a,b,r}(w)
  =
  \sum_{k=0}^{\infty}
  \frac{1}{(2k)!}\frac{d^{2k}\,B_{a,b,r}(0)}{d\,w^{2k}}w^{2k}
  =
  \sum_{k=0}^{\infty}
  \frac{\gamma_{a,b,r}(k)}{(2k)!}w^{2k}
\end{equation}
on $C$ since $B_{a,b,r}$ is an even function and has no poles on $C$.
So we have
\begin{equation}\label{eq:B}
\begin{split}
  \int_{C_r}\tau_{T(a,b)}(z)e^{Nf_{a,b,r}(z)}\,dz
  &=
  \int_{C}B_{a,b,r}(w)
  e^{N\left(-\frac{ab}{2\pi{r}\sqrt{-1}}w^2+\frac{abr\pi\sqrt{-1}}{2}\right)}
  \\
  &=
  \int_{C}
  \left\{
    \sum_{k=0}^{\infty}
    \frac{\gamma_{a,b,r}(k)}{(2k)!}w^{2k}
    e^{N\left(-\frac{ab}{2\pi{r}\sqrt{-1}}w^2+\frac{abr\pi\sqrt{-1}}{2}\right)}
  \right\}
  \,dw
  \\
  &=
  e^{\frac{Nabr\pi\sqrt{-1}}{2}}
  \sum_{k=0}^{\infty}
  \frac{\gamma_{a,b,r}(k)}{(2k)!}
  \int_{C}w^{2k}
  e^{N\left(-\frac{ab}{2\pi{r}\sqrt{-1}}w^2\right)}
  \,dw
  \\
  &=
  2
  e^{\frac{Nabr\pi\sqrt{-1}}{2}}
  \sum_{k=0}^{\infty}
  \frac{\gamma_{a,b,r}(k)}{(2k)!}
  \int_{\llcorner{C}}w^{2k}
  e^{N\left(-\frac{ab}{2\pi{r}\sqrt{-1}}w^2\right)}
  \,dw
\end{split}
\end{equation}
since the integrand is an even function, where $\llcorner{C}$ is the part of $C$ in the first quadrant.
Note that in the third equality we use the fact that the right hand side of \eqref{eq:B_expansion} converges uniformly and so we can exchange the integration and the infinite sum.
Putting $z:=abNw^2/(2\pi{r}\sqrt{-1})$, we have
\begin{equation*}
\begin{split}
  \int_{\llcorner{C}}
  w^{2k}
  e^{N\left(-\frac{ab}{2\pi{r}\sqrt{-1}}w^2\right)}
  \,dw
  &=
  \int_{0}^{\infty}
  \left(\frac{2\pi{r}\sqrt{-1}}{abN}\right)^k
  z^{k}
  e^{-z}
  \sqrt{\frac{2\pi{r}}{abN}}
  e^{\frac{\pi\sqrt{-1}}{4}}
  \times
  \frac{z^{-1/2}}{2}
  \,dz
  \\
  &=
  \sqrt{\frac{\pi{r}}{2abN}}
  \left(\frac{2\pi{r}\sqrt{-1}}{abN}\right)^k
  e^{\frac{\pi\sqrt{-1}}{4}}
  \int_{0}^{\infty}
  z^{k-1/2}
  e^{-z}
  \,dz
  \\
  &=
  \sqrt{\frac{\pi{r}}{2abN}}
  \left(\frac{2\pi{r}\sqrt{-1}}{abN}\right)^k
  e^{\frac{\pi\sqrt{-1}}{4}}
  \Gamma\left(k+\frac{1}{2}\right)
  \\
  &=
  \sqrt{\frac{\pi{r}}{2abN}}
  \left(\frac{2\pi{r}\sqrt{-1}}{abN}\right)^k
  e^{\frac{\pi\sqrt{-1}}{4}}
  \sqrt{\pi}
  \frac{(2k-1)!!}{2^k},
\end{split}
\end{equation*}
where $(2k-1)!!:=1\times3\times5\times\dots\times(2k-1)$.
Therefore \eqref{eq:B} becomes
\begin{equation}\label{eq:C}
\begin{split}
  &
  2\pi
  \sqrt{\frac{r}{2abN}}
  e^{\frac{Nabr\pi\sqrt{-1}}{2}+\frac{\pi\sqrt{-1}}{4}}
  \sum_{k=0}^{\infty}
  \frac{\gamma_{a,b,r}(k)}{(2k)!}
  \left(\frac{2\pi{r}\sqrt{-1}}{abN}\right)^k
  \frac{(2k-1)!!}{2^k}
  \\
  =&
  2\pi
  \sqrt{\frac{r}{2abN}}
  e^{\frac{Nabr\pi\sqrt{-1}}{2}+\frac{\pi\sqrt{-1}}{4}}
  \sum_{k=0}^{\infty}
  \frac{\gamma_{a,b,r}(k)}{k!}
  \left(\frac{\pi{r}\sqrt{-1}}{2abN}\right)^k.
\end{split}
\end{equation}
Finally we have
\begin{equation*}
\begin{split}
  &J_N\left(T(a,b);e^{\frac{2\pi{r}\sqrt{-1}}{N}}\right)
  \\
  =&
  \Phi_{a,b,r}(N)
  \times
  \left\{
    2\pi
    \sqrt{\frac{r}{2abN}}
    e^{\frac{Nabr\pi\sqrt{-1}}{2}+\frac{\pi\sqrt{-1}}{4}}
    \sum_{k=0}^{\infty}
    \frac{\gamma_{a,b,r}(k)}{k!}
    \left(\frac{\pi{r}\sqrt{-1}}{2abN}\right)^k
    \vphantom{
    \frac{2\sin\left(\frac{j\pi}{a}\right)
           \sin\left(\frac{j\pi}{b}\right)}{ab}}
  \right.
  \\
  &+
  \left.
    2\pi\sqrt{-1}
    \sum_{j=1}^{\floor{abr}}
    (-1)^{j+1}
    \frac{2\sin\left(\frac{j\pi}{a}\right)
           \sin\left(\frac{j\pi}{b}\right)}{ab}
    e^{Nj\pi\sqrt{-1}(1-\frac{j}{2abr})}
  \right\}
  \\
  =&
  \frac{\sqrt{abN}}{2\pi\sqrt{2r}e^{\pi\sqrt{-1}/4}\sinh(\pi{r}\sqrt{-1})}
  e^{-\left(ab\left(N^2-1\right)+\frac{a}{b}+\frac{b}{a}\right)
     \frac{\pi{r}\sqrt{-1}}{2N}}
  \\
  &\quad\times
  2\pi
  \sqrt{\frac{r}{2abN}}
  e^{\frac{Nabr\pi\sqrt{-1}}{2}+\frac{\pi\sqrt{-1}}{4}}
  \sum_{k=0}^{\infty}
  \frac{\gamma_{a,b,r}(k)}{k!}
  \left(\frac{\pi{r}\sqrt{-1}}{2abN}\right)^k
  \\
  &+
  \frac{\sqrt{abN}}{2\pi\sqrt{2r}e^{\pi\sqrt{-1}/4}\sinh(\pi{r}\sqrt{-1})}
  e^{-\left(ab\left(N^2-1\right)+\frac{a}{b}+\frac{b}{a}\right)
     \frac{\pi{r}\sqrt{-1}}{2N}}
  \\
  &\quad\times
  2\pi\sqrt{-1}
  \sum_{j=1}^{\floor{abr}}
  (-1)^{j+1}
  \frac{2\sin\left(\frac{j\pi}{a}\right)
         \sin\left(\frac{j\pi}{b}\right)}{ab}
  e^{Nj\pi\sqrt{-1}(1-\frac{j}{2abr})}
  \\
  =&
  \frac{1}{2\sqrt{-1}\sin(\pi{r})}
  e^{\left(ab-\frac{a}{b}-\frac{b}{a}\right)\frac{\pi{r}\sqrt{-1}}{2N}}
  \sum_{k=0}^{\infty}
  \frac{\gamma_{a,b,r}(k)}{k!}
  \left(\frac{\pi{r}\sqrt{-1}}{2abN}\right)^k
  \\
  &+
  \frac{\sqrt{2N}}{\sqrt{abr}e^{\pi\sqrt{-1}/4}\sin(\pi{r})}
  e^{-\left(ab\left(N^2-1\right)+\frac{a}{b}+\frac{b}{a}\right)
     \frac{\pi{r}\sqrt{-1}}{2N}}
  \\
  &\quad\times
  \sum_{j=1}^{\floor{abr}}
  (-1)^{j+1}
  \sin\left(\frac{j\pi}{a}\right)
  \sin\left(\frac{j\pi}{b}\right)
  e^{Nj\pi\sqrt{-1}(1-\frac{j}{2abr})}
  \\
  =&
  \frac{e^{\left(ab-\frac{a}{b}-\frac{b}{a}\right)\frac{\pi{r}\sqrt{-1}}{2N}}}
       {\sin(\pi{r})}
  \left\{
    \frac{1}{2\sqrt{-1}}
    \sum_{k=0}^{\infty}
    \frac{\gamma_{a,b,r}(k)}{k!}
    \left(\frac{\pi{r}\sqrt{-1}}{2abN}\right)^k
  \right.
  \\
  &+
  \frac{\sqrt{2N}}{\sqrt{abr}e^{\pi\sqrt{-1}/4}}
  e^{-Nabr\pi\sqrt{-1}/2}
  \vphantom{\left(\frac{j\pi}{a}\right)}
  \\
  &\quad\times
  \left.
    \sum_{j=1}^{\floor{abr}}
    (-1)^{j+1}
    \sin\left(\frac{j\pi}{a}\right)
    \sin\left(\frac{j\pi}{b}\right)
    e^{Nj\pi\sqrt{-1}(1-\frac{j}{2abr})}
  \right\}.
\end{split}
\end{equation*}
\par
Next assume that $\pi{r}\sqrt{-1}\in\mathcal{P}$.
For simplicity we assume that $r>0$.
In this case we replace the contour $C$ in the integral in \eqref{eq:int_C_r} with $\check{C}_r$, where $\check{C}_r$ is obtained from $C_r$ by adding a small detour below the point $\pi{r}\sqrt{-1}$ as in Figure~\ref{fig:detourdown_C_r}.
\begin{equation}\label{eq:C2}
\begin{split}
  &\int_{C}\tau_{T(a,b)}(z)e^{Nf_{a,b,r}(z)}\,dz
  \\
  =&
  \int_{\check{C}_r}
  \tau_{T(a,b)}(z)
  e^{Nf_{a,b,r}(z)}
  \,dz
  \\
  &+
  2\pi\sqrt{-1}
  \sum_{j=1}^{abr-1}
  (-1)^{j+1}
  \frac{2\sin\left(\frac{j\pi}{a}\right)
         \sin\left(\frac{j\pi}{b}\right)}{ab}
  e^{Nj\pi\sqrt{-1}(1-\frac{j}{2abr})}.
\end{split}
\end{equation}
Note that $abr$ is an integer.
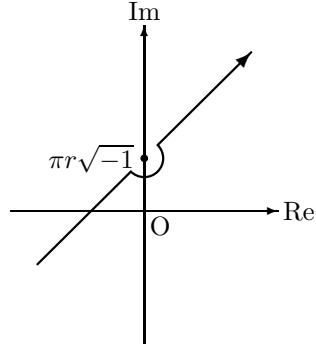
\begin{figure}[h]
\begin{picture}(100,120)(-50,-50)
  \put(-50,  0){\vector(1,0){100}}% x-axis
  \put(  0,-50){\vector(0,1){120}}% y-axis
  \put( 52,  0){\makebox(0,0)[l]{Re}}
  \put(  0, 72){\makebox(0,0)[b]{Im}}
  \thicklines
  \put(-40,-20){\line(1,1){35}}% C:tail
  \put(  5, 25){\vector(1,1){35}}% C:head
  \put(  0, 20){\arc{14.1421356}{-0.785398}{2.35619}}% detour
  \put(  2, -2){\makebox(0,0)[tl]{O}}% origin
  \put(  0, 20){\circle*{2}}% point(\pi{r}\sqrt{-1})
  \put( -4, 20){\makebox(0,0)[r]{$\pi{r}\sqrt{-1}$}}
\end{picture}
\caption{Contour $\check{C}_r$}
\label{fig:detourdown_C_r}
\end{figure}
\par
As in \eqref{eq:int_C_r->C}, we have
\begin{equation}\label{eq:int_check_C_r->check_C}
\begin{split}
  &\int_{\check{C}_r}\tau_{T(a,b)}(z)e^{Nf_{a,b,r}(z)}\,dz
  \\
  =&
  \frac{1}{2}
  \left\{
    \int_{\check{C}}
    \tau_{T(a,b)}(w+\pi{r}\sqrt{-1})
    e^{N\left(-\frac{ab}{2\pi{r}\sqrt{-1}}w^2+\frac{abr\pi\sqrt{-1}}{2}\right)}
    \,dw
  \right.
  \\
  &\quad+
  \left.
    \int_{-\hat{C}}
    \tau_{T(a,b)}(-w+\pi{r}\sqrt{-1})
    e^{N\left(-\frac{ab}{2\pi{r}\sqrt{-1}}w^2+\frac{abr\pi\sqrt{-1}}{2}\right)}
    \,dw
  \right\}
  \\
  =&
  \int_{\check{C}}
  B_{a,b,r}(w)
  e^{N\left(-\frac{ab}{2\pi{r}\sqrt{-1}}w^2+\frac{abr\pi\sqrt{-1}}{2}\right)}
  \,dw
  \\
  &-
  \frac{1}{2}
  \int_{D}
  \tau_{T(a,b)}(-w+\pi{r}\sqrt{-1})
  e^{N\left(-\frac{ab}{2\pi{r}\sqrt{-1}}w^2+\frac{abr\pi\sqrt{-1}}{2}\right)}
  \,dw,
\end{split}
\end{equation}
where $\check{C}$ is the same as $C$ except for a small detour below the origin (Figure~\ref{fig:C_detourdown}), $\hat{C}$ is obtained from $\check{C}$ by a $\pi$-rotation around the origin (Figure~\ref{fig:C_detourup}), $-\hat{C}$ is the same contour as $\hat{C}$ with reversed orientation, and $D$ is a small circle with anticlockwise orientation around the origin.
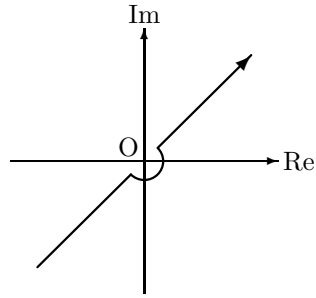
\begin{figure}[h]
\begin{picture}(100,100)(-50,-50)
  \put(-50,  0){\vector(1,0){100}}% x-axis
  \put(  0,-50){\vector(0,1){100}}% y-axis
  \put( 52,  0){\makebox(0,0)[l]{Re}}
  \put(  0, 52){\makebox(0,0)[b]{Im}}
  \thicklines
  \put(-40,-40){\line(1,1){35}}% C:tail
  \put(  5,  5){\vector(1,1){35}}% C:head
  \put(  0,  0){\arc{14.1421356}{-0.785398}{2.35619}}% detour
  \put( -2,  2){\makebox(0,0)[br]{O}}% origin
\end{picture}
\caption{Contour $\check{C}$}
\label{fig:C_detourdown}
\end{figure}
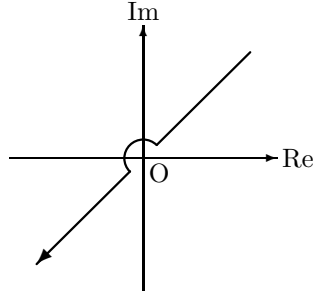
\begin{figure}[h]
\begin{picture}(100,100)(-50,-50)
  \put(-50,  0){\vector(1,0){100}}% x-axis
  \put(  0,-50){\vector(0,1){100}}% y-axis
  \put( 52,  0){\makebox(0,0)[l]{Re}}
  \put(  0, 52){\makebox(0,0)[b]{Im}}
  \thicklines
  \put( -5, -5){\vector(-1,-1){35}}% C:tail
  \put(  5,  5){\line(1,1){35}}% C:head
  \put(  0,  0){\arc{14.1421356}{2.35619}{5.49779}}% detour
  \put(  2, -2){\makebox(0,0)[tl]{O}}% origin
\end{picture}
\caption{Contour $\hat{C}$}
\label{fig:C_detourup}
\end{figure}
Now from the residue theorem, we have
\begin{equation*}
\begin{split}
  &\int_{D}
  \tau_{T(a,b)}(-w+\pi{r}\sqrt{-1})
  e^{N\left(-\frac{ab}{2\pi{r}\sqrt{-1}}w^2+\frac{abr\pi\sqrt{-1}}{2}\right)}
  \,dw
  \\
  =&
  2\pi\sqrt{-1}\times
  e^{\frac{Nabr\pi\sqrt{-1}}{2}}
  \Res\bigl(\tau_{T(a,b)}(-w+\pi{r}\sqrt{-1});w=0\bigr)
  \\
  =&
  2\pi\sqrt{-1}\times
  e^{\frac{Nabr\pi\sqrt{-1}}{2}}
  \times
  \frac{(-1)^{abr}2\sin(ar\pi)\sin(br\pi)}{ab}.
\end{split}
\end{equation*}
\par
Next we calculate the integral along $\check{C}$.
Since $\tau_{a,b,r}\left(w+\pi{r}\sqrt{-1}\right)$ has a simple pole at $0$, we can write
\begin{equation*}
  \tau_{a,b,r}\left(w+\pi{r}\sqrt{-1}\right)
  =
  \frac{\nu_{a,b,r}(-1)}{w}
  +
  \sum_{k=0}^{\infty}\nu_{a,b,c}(k)w^k
\end{equation*}
on $\check{C}$.
Then since
\begin{equation*}
  \tau_{a,b,r}\left(-w+\pi{r}\sqrt{-1}\right)
  =
  -\frac{\nu_{a,b,r}(-1)}{w}
  +
  \sum_{k=0}^{\infty}(-1)^{k}\nu_{a,b,r}(k)w^k
\end{equation*}
we have
\begin{equation*}
  B_{a,b,r}(w)
  =
  \sum_{k=0}^{\infty}\nu_{a,b,r}(2k)w^{2k}
  =
  \sum_{k=0}^{\infty}\frac{\gamma_{a,b,r}}{(2k)!}w^{2k}.
\end{equation*}
\begin{comment}
\begin{equation*}
\begin{split}
  \tau_{a,b,r}(w+\pi{r}\sqrt{-1})
  &=
  \frac{2\sinh(aw+ar\pi\sqrt{-1})\sinh(bw+br\pi\sqrt{-1})}
       {\sinh(abw+abr\pi\sqrt{-1})}
  \\
  &=
  \frac{(-1)^{abr}2\sinh(aw+ar\pi\sqrt{-1})\sinh(bw+br\pi\sqrt{-1})}
       {\sinh(abw)}
\end{split}
\end{equation*}
and so
\begin{multline*}
  B_{a,b,r}(w)
  =
  \frac{(-1)^{abr}}{\sinh(abw)}
  \left\{
    \sinh(aw+ar\pi\sqrt{-1})\sinh(bw+br\pi\sqrt{-1})
  \right.
  \\
  -
  \left.
    \sinh(-aw+ar\pi\sqrt{-1})\sinh(-bw+br\pi\sqrt{-1})
  \right\}.
\end{multline*}
Putting
\begin{equation*}
\begin{cases}
  \tau_1(w):=
  &\sinh(aw+ar\pi\sqrt{-1})\sinh(bw+br\pi\sqrt{-1})
  \\
  &-
  \sinh(-aw+ar\pi\sqrt{-1})\sinh(-bw+br\pi\sqrt{-1}),
  \\
  \tau_2(w):=
  &\sinh(abw).
\end{cases}
\end{equation*}
and using l'H{\^o}pital's rule, we have
\begin{multline*}
  \lim_{w\to0}
  B_{a,b,r}(w)
  =
  \lim_{w\to0}
  \frac{d\,\tau_1(w)/d\,w}{d\,\tau_2(w)/d\,w}
  \\
  =
  \frac{2a\cosh(ar\pi\sqrt{-1})\sinh(br\pi\sqrt{-1})
       +2b\sinh(ar\pi\sqrt{-1})\cosh(br\pi\sqrt{-1})}{ab}.
\end{multline*}
\end{comment}
Therefore in particular the function $B_{a,b}(w)$ is continuous at $w=0$, and so the path $\check{C}$ in \eqref{eq:int_check_C_r->check_C} can be replaced with $C$.
Now \eqref{eq:int_check_C_r->check_C} becomes
\begin{multline*}
  \int_{C}
  B_{a,b,r}(w)
  e^{N\left(-\frac{ab}{2\pi{r}\sqrt{-1}}w^2+\frac{abr\pi\sqrt{-1}}{2}\right)}
  \,dw
  \\
  -
  2\pi\sqrt{-1}\times
  e^{\frac{Nabr\pi\sqrt{-1}}{2}}
  \times
  \frac{(-1)^{abr}\sin(ar\pi)\sin(br\pi)}{ab}.
\end{multline*}
From \eqref{eq:B} and \eqref{eq:C}, this becomes
\begin{multline*}
  2\pi
  \sqrt{\frac{r}{2abN}}
  e^{\frac{Nabr\pi\sqrt{-1}}{2}+\frac{\pi\sqrt{-1}}{4}}
  \sum_{k=0}^{\infty}
  \frac{\gamma_{a,b,r}(k)}{k!}
  \left(\frac{\pi{r}\sqrt{-1}}{2abN}\right)^k
  \\
  -
  2\pi\sqrt{-1}\times
  e^{\frac{Nabr\pi\sqrt{-1}}{2}}
  \times
  \frac{(-1)^{abr}\sin(ar\pi)\sin(br\pi)}{ab}.
\end{multline*}
Therefore \eqref{eq:C2} becomes
\begin{multline*}
  2\pi
  \sqrt{\frac{r}{2abN}}
  e^{\frac{Nabr\pi\sqrt{-1}}{2}+\frac{\pi\sqrt{-1}}{4}}
  \sum_{k=0}^{\infty}
  \frac{\gamma_{a,b,r}(k)}{k!}
  \left(\frac{\pi{r}\sqrt{-1}}{2abN}\right)^k
  \\
  -
  2\pi\sqrt{-1}\times
  e^{\frac{Nabr\pi\sqrt{-1}}{2}}
  \times
  \frac{(-1)^{abr}\sin(ar\pi)\sin(br\pi)}{ab}
  \\
  +
  2\pi\sqrt{-1}
  \sum_{j=1}^{abr-1}
  (-1)^{j+1}
  \frac{2\sin\left(\frac{j\pi}{a}\right)
         \sin\left(\frac{j\pi}{b}\right)}{ab}
  e^{Nj\pi\sqrt{-1}(1-\frac{j}{2abr})}.
\end{multline*}
So we finally have
\begin{equation*}
\begin{split}
  &J_N\left(T(a,b);\exp\left(\frac{2\pi{r}\sqrt{-1}}{N}\right)\right)
  \\
  =&
  \frac{\sqrt{ab}}{2\pi\sqrt{2r}e^{\pi\sqrt{-1}/4}\sinh(\pi{r}\sqrt{-1})}
  \sqrt{N}
  e^{-\left(ab\left(N^2-1\right)+\frac{a}{b}+\frac{b}{a}\right)
     \frac{\pi{r}\sqrt{-1}}{2N}}
  \\
  &
  \times
  \left\{
    2\pi
    \sqrt{\frac{r}{2abN}}
    e^{\frac{Nabr\pi\sqrt{-1}}{2}+\frac{\pi\sqrt{-1}}{4}}
    \sum_{k=0}^{\infty}
    \frac{\gamma_{a,b,r}(k)}{k!}
    \left(\frac{\pi{r}\sqrt{-1}}{2abN}\right)^k
  \right.
  \\
  &\qquad
  -
  2\pi\sqrt{-1}\times
  e^{\frac{Nabr\pi\sqrt{-1}}{2}}
  \times
  \frac{(-1)^{abr}\sin(ar\pi)\sin(br\pi)}{ab}
  \\
  &\qquad
  \left.
    +
    2\pi\sqrt{-1}
    \sum_{j=1}^{abr-1}
    (-1)^{j+1}
    \frac{2\sin\left(\frac{j\pi}{a}\right)
           \sin\left(\frac{j\pi}{b}\right)}{ab}
    e^{Nj\pi\sqrt{-1}(1-\frac{j}{2abr})}
  \right\}
  \\
  =&
  \frac{e^{\left(ab-\frac{a}{b}-\frac{b}{a}\right)\frac{\pi{r}\sqrt{-1}}{2N}}}
       {\sin(\pi{r})}
  \\
  &
  \times
  \left\{
    \frac{1}{2\sqrt{-1}}
    \sum_{k=0}^{\infty}
    \frac{\gamma_{a,b,r}(k)}{k!}
    \left(\frac{\pi{r}\sqrt{-1}}{2abN}\right)^k
  \right.
  \\
  &\qquad
  -
  \frac{\sqrt{N}}
       {\sqrt{2abr}e^{\pi\sqrt{-1}/4}}
  \times
  (-1)^{abr}\sin(ar\pi)\sin(br\pi)
  \\
  &\qquad
  \left.
    +
    \frac{\sqrt{2N}e^{-\frac{Nabr\pi\sqrt{-1}}{2}}}
         {\sqrt{abr}e^{\pi\sqrt{-1}/4}}
    \sum_{j=1}^{abr-1}
    (-1)^{j+1}
    \sin\left(\frac{j\pi}{a}\right)\sin\left(\frac{j\pi}{b}\right)
    e^{Nj\pi\sqrt{-1}(1-\frac{j}{2abr})}
  \right\}.
\end{split}
\end{equation*}
\end{proof}
\section{Connected-sums}
In this section we study connected-sums of copies of the figure-eight knot and torus knots.
%%%%%%%%%%%%%%%%%%%%%%%%%%%%%%%%%%%%%%%%%%%%%%%%%
\begin{comment}
We prepare a couple of lemmas.
\begin{lem}\label{lem:torus_at_xi}
Let $\xi:=\log\bigl((3+\sqrt{5})/2\bigr)$ as before.
Then the sequence $\{J_N\bigl(T(a,b);\exp(\xi/N)\bigr)\}_{N=2,3,\dots}$ converges to $1/\Delta\bigl(T(a,b);\exp{\xi}\bigr)$
\end{lem}
\begin{proof}
From Theorem~\ref{thm:Hikami}, if $|r|<1/(ab)$ then the sequence $\{J_N\bigl(T(a,b);\exp(2\pi{r}\sqrt{-1}/N)\bigr)\}_{N=2,3,\dots}$ converges to the inverse of the Alexander polynomial evaluated at $\exp(2\pi{r}\sqrt{-1})$.
In this case $|r|=\xi/(2\pi)=0.15317\dots<1/6=0.16666\dots$.
The proof is complete because $ab\ge6$ for integers $a$ and $b$ with $a>1$, $b>1$ and $(a,b)=1$.
\end{proof}
\begin{lem}\label{lem:fig8_at_torus}
If $|y|<\pi/2$ for a real number $y$, then the sequence $\{J_N\bigl(E;\exp(y\sqrt{-1}/N)\bigr)\}_{N=2,3,\dots}$ converges to $1/\Delta\bigl(E;\exp(y\sqrt{-1})\bigr)$, where $E$ is the figure-eight knot.
In particular $\{J_N\bigl(E;\exp(2\pi\sqrt{-1}/(abN))\bigr)\}_{N=2,3,\dots}$ converges to $1/\Delta\bigl(E;\exp(2\pi\sqrt{-1}/(ab))\bigr)$ if $|ab|\ge6$.
\end{lem}
\begin{proof}
This follows from the proof of the main result in \cite{Murakami:JPJGT2007}, which was suggested by A.~Gibson.
\end{proof}
\end{comment}
%%%%%%%%%%%%%%%%%%%%%%%%%%%%%%
We have the following proposition that supports Conjecture~\ref{conj:polynomial}.
\begin{prop}
For connected-sums of copies of the figure-eight knot and torus knots, Conjecture~\ref{conj:polynomial} holds.
\end{prop}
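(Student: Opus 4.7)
\emph{Proof plan.} The approach uses the multiplicativity of the colored Jones polynomial and the Alexander polynomial under connected sum,
\begin{equation*}
  J_N(K_1\#K_2;q)=J_N(K_1;q)\,J_N(K_2;q),\qquad
  \Delta(K_1\#K_2;t)=\Delta(K_1;t)\,\Delta(K_2;t),
\end{equation*}
together with the identity $\Lambda(K_1\#K_2)=\Lambda(K_1)\cup\Lambda(K_2)$. Write $K=K_1\#\cdots\#K_n$ where each $K_i$ is $E$ or a torus knot $T(a_i,b_i)$, and let $c\in\Lambda(K)$ attain the minimum modulus. A numerical comparison of $\xi\approx 0.962$ against $2\pi/(a_ib_i)$ shows that the only possibilities are $c=\pm\xi$ (which forces every torus-knot factor, if present, to be $T(2,3)$) or $c=\pm 2\pi\sqrt{-1}/M$ with $M=\max_i\{a_ib_i\}\ge 10$.

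For $0\le t<1$ one has $|tc|<|c|\le\min\{|z|:z\in\Lambda(K_i)\}$ for every $i$. Applying the Garoufalidis--Le theorem to each factor yields
\begin{equation*}
  \lim_{N\to\infty}J_N\bigl(K_i;\exp(tc/N)\bigr)=\frac{1}{\Delta\bigl(K_i;\exp(tc)\bigr)},
\end{equation*}
and multiplying over $i$ gives the required limit.

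For $t=1$, I would partition the factors into \emph{resonant} ones ($c\in\Lambda(K_i)$) and \emph{non-resonant} ones. If $c=\pm\xi$, the resonant factors are the $E$-factors, each contributing $\sim C\,N^{2/3}$ by Theorem~\ref{thm:figure8}, while the non-resonant factors are the $T(2,3)$-factors; for each of these $|\xi|<2\pi/6$ places $c$ strictly inside the small-modulus convergence regime, so $J_N(T(2,3);\exp(\xi/N))\to 1/\Delta(T(2,3);\exp\xi)$. If $c=\pm 2\pi\sqrt{-1}/M$, the resonant factors are the torus-knot factors with $a_ib_i=M$, each contributing $\sim C\,N^{1/2}$ by Theorem~\ref{thm:torus}; the non-resonant torus-knot factors (with $a_ib_i<M$) converge to $1/\Delta(T(a_i,b_i);\exp c)$ by Theorem~\ref{thm:Hikami} (since $|c|/(2\pi)=1/M<1/(a_ib_i)$), and the non-resonant $E$-factors converge to $1/\Delta(E;\exp c)$ by Garoufalidis--Le (since $|c|\le 2\pi/10<\xi$). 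The product of one polynomial factor with several nonzero convergent factors is polynomial.

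The main technical obstacle is the non-resonant convergence when $c=\pm\xi$ and $K_i=T(2,3)$, since Theorem~\ref{thm:Hikami} is stated only for purely imaginary exponents. I would address this by rerunning the contour-integral derivation of Proposition~\ref{prop:asymptotic} with the complex parameter $r=-\sqrt{-1}\xi/(2\pi)$, choosing a contour (for instance the real axis in place of $C$) on which the Gaussian $\exp(-Nabz^2/\xi)$ still decays, and checking that no poles of $\tau_{T(2,3)}$ are crossed in the deformation because $|r|=\xi/(2\pi)<1/6$. An analogous extension covers the non-resonant $E$-factor convergence when $c=\pm 2\pi\sqrt{-1}/M$, insofar as the Garoufalidis--Le domain does not already include that point.
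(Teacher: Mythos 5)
Your overall strategy is the same as the paper's: multiplicativity of $J_N$ and $\Delta$ under connected sum, identification of the minimum-modulus element of $\Lambda(K)$ by comparing $\xi\approx0.962$ with $\pi/3$ and with $2\pi/(ab)$, polynomial growth supplied by the resonant factors via Theorems~\ref{thm:figure8} and~\ref{thm:torus}, and convergence of the non-resonant factors to nonzero constants. You also correctly isolate the one genuinely new analytic input, namely convergence of $J_N\bigl(T(2,3);\exp(\xi/N)\bigr)$ at the \emph{real} argument $\xi$, and your proposed remedy (rerun the contour derivation of Proposition~\ref{prop:asymptotic} with purely imaginary $r$, on a tilted contour where the Gaussian still decays, checking that no poles of $\tau_{T(2,3)}$ are crossed because $\xi<2\pi/6$) is exactly what the paper carries out in the Appendix as Proposition~\ref{prop:purely_imaginary}.

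Two points need repair. First, your case analysis omits the connected sums built only from trefoils and their mirror images: there $\min\{|z|\mid z\in\Lambda(K)\}=\pi/3$, i.e.\ the resonant point is $\pm\pi\sqrt{-1}/3$ with $M=6$, not $M\ge10$. The case is harmless (every factor is resonant and Theorem~\ref{thm:torus}, or rather Theorem~\ref{thm:Hikami} with $ab=6$, applies), but it must be listed. Second, and more substantively, you invoke the Garoufalidis--Le theorem both for the $0\le t<1$ clause and for the non-resonant figure-eight factors at $c=\pm2\pi\sqrt{-1}/M$. That theorem only guarantees convergence for $|c|$ \emph{sufficiently small}; it makes no claim that the domain of convergence extends all the way to the first logarithmic zero of the Alexander polynomial, and indeed that extension is part of what Conjecture~\ref{conj:polynomial} asserts, so this step is close to circular. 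The paper instead relies on results with explicit domains: Theorem~\ref{thm:Hikami} and Proposition~\ref{prop:purely_imaginary} for the torus-knot factors (covering all of $|c|<2\pi/(ab)$ on the imaginary and real axes, respectively) and the second author's earlier explicit results for the figure-eight knot at small purely imaginary arguments. You do flag this caveat for the $E$-factors (``insofar as the Garoufalidis--Le domain does not already include that point'') but not for the $t<1$ statement, where the same issue bites for $t$ close to $1$; that part of your argument needs the same explicit-domain substitutes rather than a bare appeal to Garoufalidis--Le.
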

\begin{proof}
First note that $\xi=0.96242\cdots$ and so we have
\begin{equation*}
  \frac{2\pi}{2\times3}>\xi>\frac{2\pi}{ab}
\end{equation*}
for any pair $(a,b)$ of coprime integers with $ab>6$.
Put
\begin{equation*}
  K
  :=
  T(2,3)^{k_1}\sharp
  {\overline{T(2,3)}}^{k_2}\sharp
  E^{l}\sharp
  \left(\mathop{\sharp}_{i=1}^{m_1}T\left(a_i,b_i\right)\right)\sharp
  \left(\mathop{\sharp}_{j=1}^{m_2}\overline{T\left(a_j,b_j\right)}\right),
\end{equation*}
where $a_ib_i>6$, $a_jb_j>6$, $L^{k}$ denotes the connected-sum of $k$ copies of a knot $L$, and $\overline{T\left(a_j,b_j\right)}$ is the mirror image of $T\left(a_j,b_j\right)$.
We have
\begin{equation*}
  \min\{|z|\mid z\in\Lambda(K)\}
  =
  \begin{cases}
    \frac{\pi}{3}
    &\quad\text{if $l=m_1=m_2=0$},
    \\
    \xi
    &\quad\text{if $m_1=m_2=0$},
    \\
    \min\left\{\frac{2\pi}{a_ib_i},\frac{2\pi}{a_jb_j}\right\}
        _{i=1,2,\dots,m_1,j=1,2,\dots,m_2}
    &\quad\text{otherwise}.
  \end{cases}
\end{equation*}
\par
Therefore if $l=m_1=m_2=0$, the proposition is clear.
If $m_1=m_2=0$, then we have
\begin{equation*}
\begin{split}
  &J_N\left(T(2,3)^{k_1}\sharp T(2,3)^{k_2}\sharp E^l;\exp(\xi/N)\right)
  \\
  =&
  J_N\bigl(T(2,3);\exp(\xi/N)\bigr)^{k_1}
  \times
  J_N\bigl(\overline{T(2,3)};\exp(\xi/N)\bigr)^{k_2}
  \times
  J_N\bigl(E;\exp(\xi/N)\bigr)^{l}.
\end{split}
\end{equation*}
Since we can prove that the sequence $\left\{J_N\bigl(T(2,3);\exp(\xi/N)\bigr)\right\}_{N=2,3,\dots}$ converges by using a similar technique (see Proposition~\ref{prop:purely_imaginary} below), the conjecture is true in this case.
\par
From \cite[Theorem~1.1]{Murakami:JPJGT2007}, we know that $\left\{J_N\bigl(T(2,3);\exp(c/N)\bigr)\right\}_{N=2,3,\dots}$ converges if $c$ is purely imaginary and $|c|<\pi/3$.
Since $2\pi/(a_ib_i)<\pi/3$ and $2\pi/(a_jb_j)<\pi/3$ for $i=1,2,\dots,m_1$ and $j=1,2,\dots,m_2$, we can also prove the other case.
\end{proof}
%%%%%%%%%%%%%%%%%%%%%%%%%%%%%%%%%%%%%%%%%%%%%%%%%%%%%%%
%\setcounter{section}{1}
%\setcounter{thm}{0}
%\renewcommand{\thesection}{\Alph{section}}
\appendix
\section{}
In this appendix we study the sequence $\{J_N\bigl(T(a,b);\exp(c/N)\bigr)\}_{N=2,3,\dots}$ for a real number $c$ with $|c|<2\pi/(ab)$.
First of all, note that if $c=0$ then $J_N\bigl(T(a,b);\exp(c/N)\bigr)=1$ for any $N$.
\par
If $-2\pi/(ab)<c<0$, we define the contour $C$ as a $\varphi$-rotation of the real axis with $\varphi=\pi/4+\delta$ for a small $\delta>0$ so that \eqref{eq:int_C} holds with $r:=c/(2\pi\sqrt{-1})$.
Here we choose $\delta$ small enough so that $|c|\tan{\varphi}<2\pi/(ab)$.
Let $C_r$ be the parallel translation of $C$ that passes through $c/2=\pi{r}\sqrt{-1}$.
Since $C_r$ crosses the imaginary axis at $\pi{r}\tan{\varphi}$, there is no pole of $\tau_{T(a,b)}$ between $C$ and $C_r$.
So from the same argument of \eqref{eq:int_C_r} we have
\begin{equation*}
  \int_{C}\tau_{T(a,b)}(z)e^{N\,f_{a,b,r}(z)}\,dz
  \\
  =
  \int_{C_r}
  \tau_{T(a,b)}(z)
  e^{Nf_{a,b,r}(z)}
  \,dz.
\end{equation*}
Therefore from the calculation in \cite{Murakami:INTJM62004}, we have
\begin{equation*}
  \lim_{N\to\infty}
  J_N\bigl(T(a,b);\exp(c/N)\bigr)
  =
  \frac{1}{\Delta\bigl(T(a,b);\exp(c)\bigr)}.
\end{equation*}
\par
If $0<c<2\pi/(ab)$, then we can choose the real axis as the contour $C$.
Since it is clear that there is no pole of $\tau_{T(a,b)}$ between $C$ and $C_r$, we have the same formula.
\par
Thus we have proved
\begin{prop}\label{prop:purely_imaginary}
For a real number $c$ with $|c|<2\pi/(ab)$ the sequence $\{J_N\bigl(T(a,b);\exp(c/N)\bigr)\}_{N=2,3,\dots}$ converges to $1/\Delta\bigl(T(a,b);\exp{c}\bigr)$.
\end{prop}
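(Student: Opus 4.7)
The plan is to apply the integral representation \eqref{eq:int_C} with $r:=c/(2\pi\sqrt{-1})$, which becomes a nonzero purely imaginary number, and to deform the contour so that the saddle-point analysis of \cite{Murakami:INTJM62004} applies and no residue of $\tau_{T(a,b)}$ is picked up. Once this is achieved, the leading term of the resulting asymptotic expansion is precisely $1/\Delta(T(a,b);e^{c})$, which is the asserted limit.

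The case $c=0$ is immediate since $J_N(K;1)=1$ for every knot $K$. For $c\neq 0$, the saddle point $\pi r\sqrt{-1}=c/2$ of $f_{a,b,r}$ lies on the real axis, while the poles of $\tau_{T(a,b)}$ lie on the imaginary axis, the nearest being $\pm\pi\sqrt{-1}/(ab)$. The hypothesis $|c|<2\pi/(ab)$ is exactly what guarantees sufficient separation between the saddle and the poles to allow the deformation below without sweeping any pole.

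Next I would handle the two sign cases. For $0<c<2\pi/(ab)$, I would take $C$ to be the real axis: the quadratic part of $f_{a,b,r}$ has negative leading coefficient along $\R$ so the integrand decays at $\pm\infty$, and the parallel translate $C_r$ through $c/2$ coincides with $C$, so no residue contribution arises. For $-2\pi/(ab)<c<0$, I would take $C$ to be the $\varphi$-rotation of $\R$ with $\varphi=\pi/4+\delta$, choosing $\delta>0$ small enough that $|c|\tan\varphi<2\pi/(ab)$. Then the translate $C_r$ through $c/2$ crosses the imaginary axis at a point of absolute value $|c|\tan\varphi/2<\pi/(ab)$, so the strip between $C$ and $C_r$ contains no pole of $\tau_{T(a,b)}$, and Cauchy's theorem (as in the derivation of \eqref{eq:int_C_r}) gives
\begin{equation*}
  \int_{C}\tau_{T(a,b)}(z)e^{Nf_{a,b,r}(z)}\,dz
  =\int_{C_r}\tau_{T(a,b)}(z)e^{Nf_{a,b,r}(z)}\,dz.
\end{equation*}

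The final step is to feed the integral along $C_r$ into the Laplace/saddle-point machinery already used in \cite{Murakami:INTJM62004} (essentially the calculation culminating in \eqref{eq:C}) and multiply by $\Phi_{a,b,r}(N)$; the $k=0$ leading term produces $\tau_{T(a,b)}(c/2)/(2\sinh(c/2))=1/\Delta(T(a,b);e^{c})$, and the remaining terms vanish as $N\to\infty$. The main obstacle is the geometric bookkeeping for $c<0$: one must check both that the rotated contour $C$ still supports the representation \eqref{eq:int_C} and that the translation to $C_r$ crosses no pole, which is precisely what the choice $\varphi=\pi/4+\delta$ combined with $|c|<2\pi/(ab)$ is designed to arrange.
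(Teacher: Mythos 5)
Your proposal is correct and follows essentially the same route as the paper's own proof: the $c=0$ case handled trivially, the real axis as contour for $0<c<2\pi/(ab)$, the $(\pi/4+\delta)$-rotated contour with $|c|\tan\varphi<2\pi/(ab)$ for negative $c$, the residue-free deformation to $C_r$ through $c/2$, and the appeal to the saddle-point calculation of \cite{Murakami:INTJM62004} whose leading term gives $1/\Delta\bigl(T(a,b);e^{c}\bigr)$. No substantive differences to report.
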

\par
\begin{rem}
By similar arguments we can determine the asymptotic behavior of the sequence $\{J_N\bigl(T(a,b);\exp(c/N)\bigr)\}_{N=2,3,\dots}$ for any $c\in\C$ as indicated in Figure~\ref{fig:convergence}.
See our forthcoming paper for more details.
\begin{figure}[h]
\includegraphics{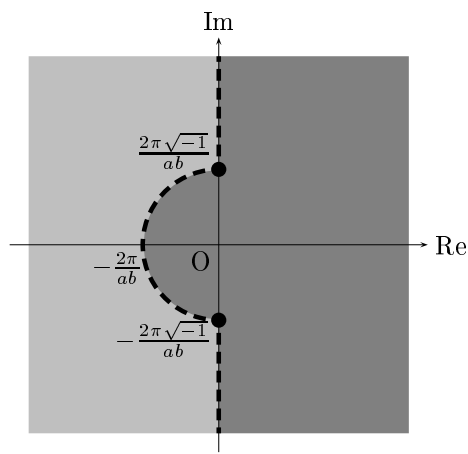}
\caption{This picture shows the $c$-plane, where the dark shadow indicates the region where $J_N\bigl(T(a,b);\exp(c/N)\bigr)$ converges, the light shadow indicates the region where it grows exponentially, the two dots indicate the points where it grows polynomially, and the broken line indicates the region where it oscillates except for integral multiples of $2\pi\sqrt{-1}$.}
\label{fig:convergence}
\end{figure}
\end{rem}
%%%%%%%%%%%%%%%%%%%%%%%%%%%%%%%%%%%%%%%%%%%%%%%%%%%%%%%%%%%%%%%%%%%%%%%%%%%%%
\bibliography{mrabbrev,hitoshi}
\bibliographystyle{amsplain}
\end{document}